\renewcommand{\Re}{\mathbb R}
\newcommand{\Red}{\Re^d}
\newcommand{\BB}{\mathbf B}
\newcommand{\B}{\mathcal B}
\newcommand{\Sph}{\mathbb{S}}
\newcommand{\Kd}{\mathcal{K}^d}
\newcommand{\Kdo}{\mathcal{K}^d_o}
\newcommand{\Ktwoo}{\mathcal{K}^2_o}
\DeclareMathOperator{\inter}{int}
\DeclareMathOperator{\conv}{conv}
\DeclareMathOperator{\proj}{proj}
\DeclareMathOperator{\vol}{vol}
\DeclareMathOperator{\bd}{bd}
\theoremstyle{plain}
\newtheorem{theorem}{Theorem}[section]
\newtheorem{lemma}[theorem]{Lemma}
\newtheorem{conjecture}{Conjecture}
\newtheorem{proposition}[theorem]{Proposition}
\newtheorem{problem}[theorem]{Problem}
\newtheorem{remark}[theorem]{Remark}
\theoremstyle{definition}
\newtheorem{definition}[theorem]{Definition}
\numberwithin{equation}{section}
\begin{document}

\title{Variants of a theorem of Macbeath in finite dimensional normed spaces}

\author[Z. L\'angi]{Zsolt L\'angi}
\author[S. Wang]{Shanshan Wang}

\address{Zsolt L\'angi, Bolyai Institute, University of Szeged,\\
Aradi v\'ertan\'uk tere 1, H-6720 Szeged, Hungary, and\\
Alfr\'ed R\'enyi Institute of Mathematics,\\
Re\'altanoda utca 13-15, H-1053, Budapest, Hungary}
\email{zlangi@server.math.u-szeged.hu}
\address{Shanshan Wang, Bolyai Institute, University of Szeged,\\
Aradi v\'ertan\'uk tere 1, H-6720 Szeged, Hungary,}
\email{shanshan\_wang87@outlook.com}

\thanks{Partially supported by the ERC Advanced Grant ``ERMiD'' and the National Research, Development and Innovation Office, NKFI, K-147544 grant, and the Project no. TKP2021-NVA-09 with the support provided by the Ministry of Innovation and Technology of Hungary from the National Research, Development and Innovation Fund and financed under the TKP2021-NVA funding scheme.}

\subjclass[2020]{52A21, 52A40, 52A27}
\keywords{theorem of Macbeath, polytope approximation, normed space, Busemann volume, Holmes-Thompson volume, Gromov's mass, Gromov's mass$^*$}

\begin{abstract}
A classical theorem of Macbeath states that for any integers $d \geq 2$, $n \geq d+1$, $d$-dimensional Euclidean balls are hardest to approximate, in terms of volume difference, by inscribed convex polytopes with $n$ vertices. In this paper we investigate normed variants of this problem: we intend to find the extremal values of the Busemann volume, Holmes-Thompson volume, Gromov's mass and Gromov's mass$^*$ of a largest volume convex polytope with $n$ vertices, inscribed in the unit ball of a $d$-dimensional normed space.
\end{abstract}

\maketitle

\section{Introduction}\label{sec:intro}

Approximation of a convex body by convex polytopes is a long-standing problem of convex geometry (see e.g. the surveys \cite{Bar07, Bro08, Hug} or the book \cite{Gruber}). In this problem, given a convex body $K$, our goal is to find a member of a fixed family of polytopes with minimal `distance' from the given body, i.e. which minimizes a given error function depending on the body and the polytope.

In this paper we investigate a classical variant of this problem: we intend to find convex polytopes contained in the body with minimal volume difference; that is, having maximal volume. One of the first results in this regard is possibly due to Blaschke \cite[49-53]{Bl23}, who proved in 1923 that among unit area plane convex bodies, the area of a maximum area inscribed triangle is minimal for Euclidean disks. This result was generalized by Sas \cite{Sas} in 1939, proving the same statement for inscribed convex $n$-gons for any fixed integer $n \geq 3$ in place of triangles. The $d$-dimensional case of this problem with $d \geq 3$ was settled by Macbeath \cite{Macbeath}, who, in a classical paper in 1957, proved that in any dimension, Euclidean balls are hardest to approximate, in terms of volume difference, by inscribed convex polytopes with $n$ vertices for any $n \geq d+1$.
The goal of this paper is to find an analogue of this problem in finite dimensional normed spaces. We note that other variants of the theorem of Macbeath appeared in the recent manuscripts \cite{BHLL25, BGKKLRV}, and that our approach to find normal variants of the problem of Macbeath was used also in \cite{Langi16} for a different problem.

Our investigation is carried out in the $d$-dimensional real vector space $\Re^d$. During this, we use the standard notation $\inter(\cdot)$, $\bd(\cdot)$, $\conv(\cdot)$ for the interior, boundary and convex hull of a set, and denote by $[x,y]$ the closed segment with endpoints $x,y$.
We denote the family of $d$-dimensional convex bodies, and the subfamily consisting of the origin-symmetric elements, by $\Kd$ and $\Kdo$, respectively.
If $B \in \Kdo$, the normed space with $B$ as its unit ball is denoted by $\B$. The norm in this space is denoted as $|| \cdot ||_B$. We also equip $\Re^d$ with the usual inner product $\langle \cdot, \cdot \rangle$, and the induced Euclidean norm $| \cdot|$. The Euclidean unit ball in $\Re^d$, with $o$ as its center, is denoted by $\BB^d$, and we set $\Sph^{d-1}= \bd \BB^d$ and 
$\kappa_d = \lambda_d(\BB^d)$, where $\lambda_d(\cdot)$ denotes $d$-dimensional Lebesgue measure. We denote the polar of a set $S$ by $S^\circ$.
Finally, for simplicity, we call a plane convex body a \emph{disk}, and by a convex $k$-gon we mean a convex polygon with at most $k$ vertices.

Recall the well-known fact that any finite dimensional real normed space can be equipped with a Haar measure, and that it is
unique up to multiplication of the standard Lebesgue measure by a scalar.
Depending on the choice of this scalar, one may define more than one version of normed volume.
We investigate the four variants of volume that are most regularly used in the literature.
The \emph{Busemann} and \emph{Holmes-Thompson volume} of a set $S$ in a $d$-dimensional normed space with unit ball $B$, is defined as
\begin{equation}\label{eq:BusHTdef}
\vol^{Bus}_B(S) = \frac{\kappa_d}{\lambda_d(B)} \lambda_d(S) \, \hbox{and} \, \vol^{HT}_B(S) = \frac{\lambda_d(B^\circ)}{\kappa_d} \lambda_d(S),
\end{equation}
respectively.
Note that the Busemann volume of the unit ball, and the Holmes-Thompson volume of its polar, are equal to that of a Euclidean unit ball.
For \emph{Gromov's mass}, the scalar is chosen in such a way that the volume of a maximal volume cross-polytope, inscribed in the unit ball $B$
is equal to $\frac{2^d}{d!}$ , and for \emph{Gromov's mass*} (or \emph{Benson's definition of volume}), the volume of a smallest volume parallelotope, circumscribed about $B$, is equal to $2^d$.
We denote the two latter quantities by $\vol^{m}_B(S)$ and $\vol^{m*}_B(S)$, respectively. For more information on these concepts of volume, the reader is referred to \cite{PT05, PT10}.

Now we are ready to define the main problem investigated in this paper.

\begin{definition}\label{defn:normedMB}
For any $B\in\Kdo$ and $\tau\in\{Bus,HT,m,m^{*}\}$, we set
\begin{equation}\label{eq:normedMacbeath}
\mu_n^{\tau} (B) = \sup \{\vol_{B} ^{\tau} (Q): Q \hbox{ is a polytope inscribed in } B \hbox{ with at most } n \hbox{ vertices}\}.
\end{equation}
\end{definition}

\begin{problem}\label{prob:main}
For every $d \geq 2$, $n \geq d+1$ and $\tau\in\{Bus,HT,m,m^{*}\}$, find
\[
m^{\tau}(n,d)=\inf \left\{ \mu_n^{\tau}(B) : B \in \Kdo \right\}, \hbox{ and } M^{\tau}(n,d)=\sup \left\{ \mu_n^{\tau}(B) : B \in \Kdo \right\}.
\]
\end{problem}

\begin{remark}\label{rem:affineinvariance}
We observe that for any $d \geq 2$, $n \geq d+1$, $B \in \Kdo$, nondegenerate linear transformation $L : \Re^d \to \Re^d$, $n$, and $\tau\in\{Bus,HT,m,m^{*}\}$, $\mu_n^{\tau} (L(B)) = \mu_n^{\tau} (B)$. Thus, by the compactness of affine classes of $o$-symmetric convex bodies (see \cite{TJ89}), the extremal values of the functionals in Problem~\ref{prob:main} are attained at some convex bodies in $\Kdo$.
\end{remark}

In the paper, for any $B \in \Kdo$, we denote by $I(B)$ and $C(B)$ a maximum volume cross-polytope inscribed in $B$, and a minimum volume parallelotope circumscribed about $B$, respectively. Furthermore, for any $n \geq d+1$, by $Q_n(B)$ we denote a maximum volume convex polytope in $B$ with at most $n$ vertices.

\begin{remark}\label{rem:volformulas}
For any $d \geq2$, $n \geq d+1$ and $B \in \Kdo$, we have
\begin{equation}\label{eq:volformulas}
\begin{split}
\mu_n^{Bus}(B) = \frac{\lambda_d(Q_n(B))}{\lambda_d(B)} \kappa_d, \quad \mu_n^{HT}(B)= \frac{\lambda_d(Q_n(B)) \lambda_d(B^{\circ})}{\kappa_d}, \\
\mu_n^{m}(B) = \frac{\lambda_d(Q_n(B))}{\lambda_d(I(B))} \cdot \frac{2^d}{d!}, \quad \mu_n^{m*}(B) = \frac{\lambda_d(Q_n(B))}{\lambda_d(C(B))} \cdot 2^d.
\end{split}
\end{equation}
\end{remark}

The remaining part of the paper is structured as follows. In Section~\ref{sec:Radon} we give a brief introduction to Radon curves and equiframed curves, which we often use in our investigation. In Section~\ref{sec:Busemann} we present our results about the Busemann volumes of inscribed polytopes. In Sections~\ref{sec:HT}-\ref{sec:Grmstar} we investigate the Holmes-Thompson volume, Gromov's mass and Gromov's mass$^*$ of inscribed polytopes, respectively. Finally, in Section~\ref{sec:rem} we collect our additional remarks and open questions.

\section{Description of Radon curves and equiframed curves}\label{sec:Radon}

The aim of this section is to give a short introduction to Radon curves and equiframed curves. Most of the content of this section can be found in the paper \cite{MS} of Martini and Swanepoel (for more information on these topics, see also \cite{PT05, PT10} or the survey \cite{MSW}). We follow the approach in \cite{MS}.

Consider an $o$-symmetric closed, convex, continuous curve $\Gamma$ in the plane. This curve is called a \emph{Radon curve} if every boundary point is a vertex of some maximum area parallelogram inscribed in $\Gamma$. Analogously, $\Gamma$ is called an \emph{equiframed} curve, if every point of $\Gamma$ lies on the side of a smallest area parallelogram circumscribed about $\Gamma$. We note that these properties are linear invariants: for any nondegenerate linear transformation $A$, $\Gamma$ is Radon or equiframed iff $A(\Gamma)$ is Radon or equiframed, respectively. We remark also that Radon curves commonly appear in the context of normed planes: the boundary of the unit disk of a normed plane is a Radon curve iff Birkhoff orthogonality is a symmetric relation in the plane (see e.g. \cite{Langi16}).

The following statements can be found in \cite{MS}. Before stating them, we note that for any convex disk $C$ in the plane and any $p \in \bd(C)$, the union of the lines supporting $C$ at $p$ is a (possibly degenerate) double cone with apex $p$.  If $L_1, L_2$ are the lines bounding this double cone, then, with a suitable labelling of the lines $L_2$ is the image of $L_1$ under a rotation around $p$, with an acute angle measured in counterclockwise direction. In this case $L_2$ is called a \emph{right semi-tangent} of $C$ at $p$.

\begin{proposition}\label{prop:MS}
Let $C$ be an $o$-symmetric convex disk in the plane. Then the following are equivalent:
\begin{enumerate}
\item[(a)] $\bd(C)$ is an equiframed curve.
\item[(b)] Each right semi-tangent of $C$ contains a side of a circumscribed parallelogram of least area.
\item[(c)] Each point of $\bd(C)$ belongs to a side of a smallest area parallelogram circumscribed about $C$.
\end{enumerate}
Furthermore, the following are also equivalent.
\begin{enumerate}
\item[(A)] $\bd(C)$ is a Radon curve.
\item[(B)] Each supporting line of $C$ contains a side of a circumscribed parallelogram of least area.
\item[(C)] Each point of $\bd(C)$ is the midpoint of some side of a smallest area parallelogram circumscribed about $C$.
\end{enumerate}
\end{proposition}

Note that by Proposition~\ref{prop:MS}, every Radon curve is equiframed. The following construction characterizes Radon curves in the sense that every Radon curve can be obtained using this construction, and every curve constructed in this way is a Radon curve. In its description, $|x,y|$ denotes the determinant of the $2 \times 2$ matrix with columns $x,y$.

Let $\alpha > 0$, and consider vectors $e_1, e_2$ such that $|e_1,e_2| = \alpha$. Let $P$ be the $o$-symmetric parallelogram defined as $[-e_1,e_1] + [-e_2,e_2]$.
The lines containing the two generating segments of $P$ divide the plane into four (closed) quadrants, labelled I, II, III and IV. We choose the notation such that $III=-I$, $IV=-II$, $I$ contains $e_1, e_2$ on its boundary, and $e_2$ is contained also in the boundary of II.

Consider any curve $\Gamma_I$ connecting $e_1$ and $e_2$ within $P$ such that $\Gamma_I \cup [o,e_1] \cup [o,e_2]$ is the boundary of a convex disk $C_1$.
Now we define a curve $\Gamma_{II}$ in II. For each direction $e_2'$ in II different from $-e_1, e_2$, we choose $\lambda e_2'$ as the point of $\Gamma_{II}$ in this direction such that $|x,\lambda e_2'| = \alpha$ for any point $x$ of a supporting line of $C_1$, parallel to $e_2'$. Then $\Gamma_{II}$ connects $-e_1$ and $e_2$, and $\Gamma_{II} \cup [o,-e_1] \cup \cup [o,e_2]$ is a convex disk in $P \cap II$. We set $\Gamma_{III}= -\Gamma_I$ and $\Gamma_{IV}=-\Gamma_{II}$, and obtain a Radon curve $\Gamma = \Gamma_I \cup \Gamma_{II} \cup \Gamma_{III} \cup \Gamma_{IV}$.

We note that if $A$ is a linear transformation that maps $e_1, e_2$ into an orthonormal basis, then $A(\conv \Gamma)$ is an $o$-symmetric convex disk with the property that its polar coincides with the rotated copy of itself by $\frac{\pi}{2}$ around $o$.

In the last part of this section we present a construction characterizing equiframed curves. We start with a Radon curve $\Gamma$. Let $C= \conv (\Gamma)$. A \emph{wedge} of $\Gamma$ is the union $[a,b] \cup [b,c]$ of two segments in $\Gamma$, where $b$ is called the \emph{vertex} of the wedge. Consider any wedge $[a,b] \cup [b,c]$ in $\Gamma_I$. Then the corresponding part of $\Gamma_{II}$ (where the correspondence is defined by the construction of $\Gamma_{II}$ described above) is a segment $[d,e]$ in $\Gamma_{II}$ with nonregular endpoints. In particular, there are supporting lines $L_d$ and $L_e$ of $\Gamma$ at $d$ and $e$, respectively, which are parallel to $[o,a]$ and $[o,c]$. Consider any point $p$ in the triangle bounded by $[d,e]$, $L_d$ and $L_e$, and replace $[d,e]$ by $[d,p] \cup [p,e]$. We may repeat this procedure countably many times for each wedge in $\Gamma_I$. Let the modified curve obtained from $\Gamma_{II}$ in this way be denoted by $\Gamma_{II}'$, and set $\Gamma_{III}'=-\Gamma_I'$. Then $\Gamma' = \Gamma_I \cup \Gamma_{II}' \cup \Gamma_{III}' \cup \Gamma_{IV}$ is an equiframed curve, and every equiframed curve can be constructed in this way. We note also that the smallest area parallelograms circumscribed about $\Gamma$ and $\Gamma'$ have equal areas.

\section{Extremal values of the Busemann volumes of inscribed polytopes}\label{sec:Busemann}

In this section we intend to find the values of $m^{Bus}(n,d)$ and $M^{Bus}(n,d)$ for $d \geq 2$ and $n \geq d+1$. In other words, we need to find the extremal values of
\[
\mu_n^{Bus}(B) = \frac{\lambda_d(Q_n(B))}{\lambda_d(B)} \kappa_d.
\]
over all $B \in \Kdo$. We start with some simple observations.

\begin{remark}\label{rem:Bushighdimmin}
By the result of Macbeath \cite{Macbeath} mentioned in the introduction, 
for any $d \geq 2$ and $n \geq d+1$, $\frac{\lambda_d(Q_n(B))}{\lambda_d(B)}$ is minimal if $B = \BB^d$. Thus, $m^{Bus}(n,d) = \mu_n^{Bus}(\BB^d)$ for all possible values of $d$ and $n$. From the result of Sas \cite{Sas} it also follows that $m^{Bus}(n,2) = \mu_n^{Bus}(B)$ implies that $B$ is an ellipse.
\end{remark}

\begin{remark}\label{rem:Bushighdimmax}
If $n \geq 2d$, then there are $o$-symmetric polytopes in $\Re^d$ with at most $n$ vertices, implying that in this case $M^{Bus}(n,d) = \kappa_d$, and $M^{Bus}(n,d) = \mu_n^{Bus}(B)$ implies that $B$ is an $o$-symmetric polytope with at most $n$ vertices.
\end{remark}

In the remaining part of this section we investigate $M^{Bus}(n,d)$ under the condition that $d+1 \leq n \leq 2d-1$.

\begin{remark}\label{rem:Busplanemax3}
Let $T$ be a triangle containing $o$ in its midtriangle. Then an elementary computation shows that for $B= \conv (T \cup (-T))$, we have $\lambda_2(B)=2\lambda_2(T)$. On the other hand, $B$ is an $o$-symmetric convex hexagon if and only if $B= \conv (T \cup (-T))$ for some triangle $T$ with the property that $o$ is contained in its midtriangle. Thus, $M^{Bus}(3,2)=\frac{\pi}{2}$, and $\mu_3^{Bus}(B)=\frac{\pi}{2}$ if and only if $B$ is a (possibly degenerate) $o$-symmetric hexagon.
\end{remark}

Our main result in this section is Theorem~\ref{thm:Bushighdimmax}.

\begin{theorem}\label{thm:Bushighdimmax}
Let $d \geq 3$, with $d=4m+r$ for some integers $m, r$, where $0 \leq r \leq 3$. Then
\begin{itemize}
\item[(a)] $M^{Bus}(d+1,d)= \frac{\kappa_d}{\binom{d}{\lfloor \frac{d}{2}\rfloor}}$, and $M^{Bus}(d+1,d)=\mu_n^{Bus}(B)$ for $B \in \Kdo$  if $B = \conv (S \cup (-S))$ for some regular simplex centered at $o$.
\item[(b)] If $0 \leq r \leq 1$, then
\[
M^{Bus}(d+2,d)= \frac{\kappa_d}{\binom{2m+1}{m} \cdot \binom{2m+r-1}{m}}.
\]
Furthermore, $M^{Bus}(d+2,d) = \mu_n^{Bus}(B)$ for $B \in \Kdo$ if $B=\conv (S_1 \cup S_2 \cup (-S_1) \cup (-S_2))$, where $S_1$ and $S_2$ are regular simplices centered at $o$, of dimensions $2m+1$ and $2m+r-1$, respectively, lying in orthogonal linear subspaces;
\item[(c)] If $2 \leq r \leq 3$, then
\[
M^{Bus}(d+2,d)= \frac{\kappa_d}{\binom{2m+1}{m} \cdot \binom{2m+r-1}{m+1}}.
\]
Furthermore, $M^{Bus}(d+2,d) = \mu_n^{Bus}(B)$ for $B \in \Kdo$ if $B=\conv (S_1 \cup S_2 \cup (-S_1) \cup (-S_2))$, where $S_1$ and $S_2$ are regular simplices centered at $o$, of dimensions $2m+1$ and $2m+r-1$, respectively, lying in orthogonal linear subspaces.
\end{itemize}
\end{theorem}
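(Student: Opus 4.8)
The plan is to treat the three parts through a single reflection bound. Since $B\in\Kdo$, any polytope $Q\subseteq B$ satisfies $-Q\subseteq B$, so $\conv(Q\cup(-Q))\subseteq B$ and $\lambda_d(B)\ge\lambda_d(\conv(Q\cup(-Q)))$. Taking $Q=Q_n(B)$ reduces the evaluation of $M^{Bus}(n,d)=\kappa_d\sup_B\lambda_d(Q_n(B))/\lambda_d(B)$ to the geometric problem of maximizing $\rho(Q)=\lambda_d(Q)/\lambda_d(\conv(Q\cup(-Q)))$ over all $d$-polytopes $Q$ with at most $n$ vertices; an extremal body is then recovered as $B=\conv(Q^\ast\cup(-Q^\ast))$ for an optimal $Q^\ast$, the reflection bound forcing $Q^\ast$ to be a maximal polytope $Q_n(B)$ a posteriori.

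For part (a), $Q$ is a simplex with vertices $v_0,\dots,v_d$; writing $o=\sum_i\alpha_i v_i$ with $\sum_i\alpha_i=1$, I would model $Q=\pi(\Delta)$, where $\Delta=\conv(e_0,\dots,e_d)\subseteq\Re^{d+1}$ is the standard simplex and $\pi\colon e_i\mapsto v_i$ is the linear map with kernel $\Re\alpha$, $\alpha=(\alpha_0,\dots,\alpha_d)$. Then $\conv(Q\cup(-Q))=\pi(B_1^{d+1})$ for the cross-polytope $B_1^{d+1}=\conv\{\pm e_0,\dots,\pm e_d\}$. Realizing the ratio via orthogonal projection onto $\alpha^\perp$ (a different complement only rescales $\lambda_d(\pi(\cdot))$ by a constant, which cancels) and applying Cauchy's projection formula over the $2^{d+1}$ facets of $B_1^{d+1}$ yields
\[
\frac{\lambda_d(\conv(Q\cup(-Q)))}{\lambda_d(Q)}=\frac12\sum_{\epsilon\in\{\pm1\}^{d+1}}\bigl|\langle\alpha,\epsilon\rangle\bigr|=:R(\alpha).
\]
As $R$ is convex and symmetric under permutations of the coordinates of $\alpha$, averaging $R$ over these permutations shows it is minimized on the hyperplane $\{\sum_i\alpha_i=1\}$ at the centroid $\alpha^\ast=(\tfrac1{d+1},\dots,\tfrac1{d+1})$; evaluating there via $\sum_\epsilon|\sum_i\epsilon_i|=2(d+1)\binom{d}{\lfloor d/2\rfloor}$ gives $\min R=\binom{d}{\lfloor d/2\rfloor}$. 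Hence $\sup_Q\rho(Q)=\binom{d}{\lfloor d/2\rfloor}^{-1}$ and $M^{Bus}(d+1,d)=\kappa_d/\binom{d}{\lfloor d/2\rfloor}$; the bound is attained by any simplex centered at $o$, in particular a regular one, producing the stated extremal body.

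For parts (b) and (c) I would maximize $\rho(Q)$ over $d$-polytopes with $d+2$ vertices, using the classical Gale-diagram classification (each such polytope is a $z$-fold pyramid over a direct sum $S_1\oplus S_2$ of simplices of dimensions $d_1,d_2\ge1$ with $d_1+d_2+z=d$). For the genuine direct sums ($z=0$), since $\rho$ is affine invariant we may place $S_1,S_2$ in orthogonal subspaces with common center $o$; the free-sum volume formula $\lambda_d(K_1\oplus K_2)=\tfrac{d_1!\,d_2!}{d!}\,\lambda_{d_1}(K_1)\lambda_{d_2}(K_2)$ applies to both $Q=S_1\oplus S_2$ and to $B=\conv(S_1\cup(-S_1))\oplus\conv(S_2\cup(-S_2))$, and the common prefactor cancels, giving $\rho(Q)=\rho(S_1)\rho(S_2)$, a product of two part-(a) problems. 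Part (a) then forces each $S_i$ to be regular and centered at $o$, yielding $\rho(Q)=\bigl(\binom{d_1}{\lfloor d_1/2\rfloor}\binom{d_2}{\lfloor d_2/2\rfloor}\bigr)^{-1}$, and it remains to minimize $\binom{d_1}{\lfloor d_1/2\rfloor}\binom{d_2}{\lfloor d_2/2\rfloor}$ over $d_1+d_2=d$. Writing $f(k)=\binom{k}{\lfloor k/2\rfloor}$, the ratio $f(k+1)/f(k)$ equals $2$ for odd $k$ and $\tfrac{k+1}{k/2+1}<2$ for even $k$; a short exchange argument based on this identifies the minimizer as the parity-balanced split $d_1=2m+1$, $d_2=2m+r-1$, which after rewriting via $\binom{a}{b}=\binom{a}{a-b}$ gives precisely the products in (b) and (c).

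The main obstacle is the reduction to the case $z=0$, i.e. proving that pyramidal types are never extremal. A pyramid apex $v$ lying off the direct-sum core forces $\conv(Q\cup(-Q))$ to contain the full segment $[-v,v]$ transverse to that core, inflating the reflected body faster than the apex enlarges $Q$; I expect to show that $\rho$ strictly increases when such an apex is absorbed into one of the two simplices (lowering $z$ by one while preserving the vertex count), so that the optimum occurs at $z=0$. Making this comparison quantitative and uniform — after which the factorization $\rho(Q)=\rho(S_1)\rho(S_2)$ already rules out any non-orthogonal or non-regular competitor — is the delicate step; the Cauchy computation, the symmetric-convexity minimization, and the binomial lemma with its four residue cases are then routine.
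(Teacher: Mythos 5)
Your part (a) is correct and takes a genuinely different route from the paper: where the paper runs a shadow system on the translation vector $x$ and uses the Rogers--Shephard convexity lemma (Lemma~\ref{lem:RS}) together with the symmetry group of the simplex, you lift $Q$ to the standard simplex in $\Re^{d+1}$ and compute the ratio exactly by Cauchy's projection formula, reducing everything to minimizing the convex, permutation-symmetric function $R(\alpha)=\tfrac12\sum_{\epsilon}|\langle\alpha,\epsilon\rangle|$ on the hyperplane $\sum_i\alpha_i=1$. This is a clean, self-contained replacement for the paper's argument for (a), and your combinatorial identity and the resulting value $\binom{d}{\lfloor d/2\rfloor}$ check out. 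Your reduction of $M^{Bus}(n,d)$ to $\sup_Q\rho(Q)$ and your binomial optimization at the end (which matches Lemma~\ref{lem:whichdim}) are also fine.

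Parts (b) and (c), however, have a genuine gap, and it sits exactly where the paper's main work is. You write that ``since $\rho$ is affine invariant we may place $S_1,S_2$ in orthogonal subspaces with common center $o$.'' But $\rho(Q)=\lambda_d(Q)/\lambda_d(\conv(Q\cup(-Q)))$ is invariant only under invertible \emph{linear} maps: the reflection is through the origin, which is pinned as the center of symmetry of $B$, so translations change $\rho$. The classification of $d$-polytopes with $d+2$ vertices hands you two simplices whose affine hulls meet at a point $w$ determined by $Q$, and there is no freedom to move $w$ to $o$. When the configuration is not centered, $\conv(Q\cup(-Q))$ is not a free sum and the factorization $\rho(Q)=\rho(S_1)\rho(S_2)$ simply fails. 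Proving that $\lambda_d(\conv(Q\cup(-Q)))$ is minimized precisely at the centered position is the heart of the matter; the paper does this by reflecting the pair of translation vectors $(x_1,x_2)$ in the perpendicular bisector of an edge of $S_1$ or $S_2$ (a shadow system whose volume is convex and even in the parameter, hence monotone toward the bisector) and then iterating such projections to drive $(x_1,x_2)$ to $(o,o)$ via Lemma~\ref{lem:projections}. Some argument of this kind is indispensable and is absent from your proposal. By contrast, the second difficulty you flag --- the pyramidal Gale types $z\ge 1$ --- is avoidable rather than fatal: if you state the classification as the paper does, namely $Q=\conv((x_1+S_1)\cup(x_2+S_2))$ with the affine hulls meeting in a single point that lies in both \emph{closed} simplices (possibly on their boundaries), then pyramids are already included, and the free-sum volume formula $\lambda_d(\conv(K_1\cup K_2))=\tfrac{k!(d-k)!}{d!}\lambda_k(K_1)\lambda_{d-k}(K_2)$ is insensitive to where the meeting point sits inside the two bodies, so no separate apex-absorption argument is needed.
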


The proof of Theorem~\ref{thm:Bushighdimmax} is based on the properties of shadow systems (also called \emph{linear parameter systems}), introduced by Rogers and Shephard \cite{RS58} and studied by many authors.

\begin{definition}\label{defn:shadowsystem}
Let $X \subset \Re^d$, let $v \in \Re^d$ be a nonzero vector, and for any $x \in X$, let $\lambda_x \in \Re$. Then the $1$-parameter family of sets
\[
C(t) = \conv \{ x + t \lambda_x v : x \in X \}, \quad t \in \Re
\]
is called a \emph{shadow system}.
\end{definition}

We use the following result of Rogers and Shepard, stated as \cite[Lemma 1]{RS58}.

\begin{lemma}\label{lem:RS}
With the notation in Definition~\ref{defn:shadowsystem}, the function $t \mapsto \lambda_d(C(t))$ is convex.
\end{lemma}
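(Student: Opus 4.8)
The plan is to reduce the statement to a one-dimensional convexity fact applied fibrewise and then integrate. First I would normalize the direction: choosing an invertible linear map $\Phi \colon \Re^d \to \Re^d$ with $\Phi(v) = e_d$, the image $\Phi(C(t)) = \conv\{\Phi(x) + t\lambda_x e_d : x \in X\}$ is again a shadow system, now in the direction $e_d$, and $\lambda_d(\Phi(C(t))) = |\det \Phi| \cdot \lambda_d(C(t))$. Since $|\det\Phi|$ is a positive constant, it suffices to treat the case $v = e_d$, which I assume from now on; I may also assume $X$ finite for notational convenience (the general bounded case being analogous).

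Writing points of $\Re^d$ as $(u,s)$ with $u \in \Re^{d-1}$ and $s \in \Re$, and writing $x = (u_x, s_x)$, the generators of $C(t)$ are the points $(u_x, s_x + t\lambda_x)$, whose first $d-1$ coordinates do not depend on $t$. Hence the orthogonal projection of $C(t)$ onto $\Re^{d-1}$ equals $\bar C = \conv\{u_x : x \in X\}$, which is independent of $t$. For $u \in \bar C$ I would consider the vertical chord $C(t) \cap (\{u\}\times\Re)$ and its endpoints
\[
f(u,t) = \max\{s : (u,s) \in C(t)\}, \qquad g(u,t) = \min\{s : (u,s) \in C(t)\},
\]
so that, by Fubini's theorem,
\[
\lambda_d(C(t)) = \int_{\bar C} \big(f(u,t) - g(u,t)\big)\, du .
\]

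The key step is to fix $u$ and examine the dependence on $t$. Since every point of $C(t)$ is a convex combination of the generators, one has
\[
f(u,t) = \max\Big\{ \textstyle\sum_x \theta_x s_x + t\sum_x \theta_x \lambda_x : \theta \in \Theta_u \Big\},
\]
where $\Theta_u = \{\theta \ge 0 : \sum_x\theta_x = 1,\ \sum_x \theta_x u_x = u\}$ is a fixed convex set not depending on $t$. For each fixed $\theta$ the quantity in braces is affine in $t$, so $f(u,\cdot)$, being a pointwise supremum of affine functions, is convex in $t$; dually, $g(u,\cdot)$ is a pointwise infimum of affine functions and hence concave in $t$. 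Consequently the chord length $f(u,t)-g(u,t)$ is convex in $t$ for each fixed $u \in \bar C$. Integrating a family of functions that are convex in $t$ preserves convexity: for $t = \alpha t_1 + (1-\alpha)t_2$ with $\alpha \in [0,1]$, integrating the fibrewise inequality over $\bar C$ yields $\lambda_d(C(t)) \le \alpha\,\lambda_d(C(t_1)) + (1-\alpha)\,\lambda_d(C(t_2))$, which is the desired convexity.

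The main point requiring care is the fibrewise convexity of $f(u,\cdot)$ and concavity of $g(u,\cdot)$: the bilinear term $t\,\theta_x\lambda_x$ prevents the graph set $\{(u,s,t) : (u,s)\in C(t)\}$ from being convex, so one cannot simply invoke concavity of the upper boundary of a single convex body. The correct mechanism is that, for fixed $u$, the admissible weight set $\Theta_u$ is frozen, and only then is $f(u,\cdot)$ an envelope of affine functions of $t$. The remaining measure-theoretic points—finiteness of the integrand on $\bar C$ for $t$ in a bounded interval and the legitimacy of integrating the fibrewise inequality—are routine given boundedness of the generating set.
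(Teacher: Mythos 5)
Your proof is correct. The paper itself does not prove this lemma---it is imported verbatim as Lemma~1 of Rogers and Shephard \cite{RS58}---so there is no internal argument to compare against; your fibrewise proof (normalize to $v=e_d$, observe that the projection onto $\Re^{d-1}$ is independent of $t$, express the upper and lower boundary over each fibre as a supremum, respectively infimum, of functions affine in $t$ over the frozen weight set $\Theta_u$, and integrate the resulting convexity of the chord length) is essentially the classical argument from that source, including the correct identification of the key subtlety that one must freeze the fibre before taking the affine envelope. The step you leave as routine---passing from finite $X$ to a general bounded generating set---is indeed harmless and, in any case, irrelevant to this paper, where the lemma is only ever applied with $X$ finite.
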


To prove Theorem~\ref{thm:Bushighdimmax}, we need some additional lemmas.

\begin{lemma}\label{lem:projections}
Let $\mathcal{H}$ be a family of finitely many hyperplanes in $\Re^d$, where the intersection of all hyperplanes is $\{ o \}$. Let $X= (x_1, x_2, \ldots, x_k) \in \left( \Re^d \right)^k$ be a finite sequence of points. Let $X_H$ denote the closure of the $k$-element point-sequences in $\left( \Re^d \right)^k$ that can be obtained from $X$ by applying finitely many subsequent projections onto hyperplanes in $\mathcal{H}$ to each element of $X$ simultaneously. Then $(o,o,\ldots, o) \in X_H$.
\end{lemma}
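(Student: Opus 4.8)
The plan is to reduce the whole statement to a single point and then exploit that every projection is a non-expansive linear map. First I would record three elementary facts, taking ``projection onto a hyperplane'' to mean the orthogonal (nearest-point) projection. Since $\bigcap_{H \in \mathcal{H}} H = \{o\}$, every hyperplane $H \in \mathcal{H}$ contains the origin, so the orthogonal projection $P_H$ onto $H$ is \emph{linear} and satisfies $|P_H v| \le |v|$ for all $v$, with $|P_H v| < |v|$ whenever $v \notin H$ (by the Pythagorean identity $|v|^2 = |P_H v|^2 + |v - P_H v|^2$). Moreover, if $v \ne o$ then $v \notin H$ for at least one $H \in \mathcal{H}$, since otherwise $v \in \bigcap_{H} H = \{o\}$; hence some projection strictly shrinks every nonzero vector. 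Write $\mathcal{S}$ for the semigroup of linear maps generated by $\{P_H : H \in \mathcal{H}\}$ together with the identity; every $P \in \mathcal{S}$ is non-expansive.

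The core step is the single-point version: for each $x \in \Re^d$ the origin lies in the closure $\overline{\mathcal{S}x}$ of the orbit $\mathcal{S}x = \{Px : P \in \mathcal{S}\}$. Every orbit point has norm at most $|x|$, so $\overline{\mathcal{S}x}$ is compact and the norm attains a minimum value $r$ on it, say at a point $y$. The set $\overline{\mathcal{S}x}$ is invariant under each $P_H$: if $y = \lim_n P_n x$ with $P_n \in \mathcal{S}$, then $P_H y = \lim_n P_H P_n x \in \overline{\mathcal{S}x}$ by continuity of $P_H$. If $r > 0$ then $y \ne o$, so some $P_H$ strictly decreases $|y|$, producing a point of $\overline{\mathcal{S}x}$ of norm less than $r$, a contradiction. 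Hence $r = 0$ and $o \in \overline{\mathcal{S}x}$.

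Finally I would assemble the $k$ points. Fix $\varepsilon > 0$. Using the single-point step I choose $P^{(1)} \in \mathcal{S}$ with $|P^{(1)} x_1| < \varepsilon$; applying $P^{(1)}$ to all of $X$ and then choosing $P^{(2)} \in \mathcal{S}$ with $|P^{(2)} (P^{(1)} x_2)| < \varepsilon$, and continuing, after $k$ stages I obtain a single map $P = P^{(k)} \cdots P^{(1)} \in \mathcal{S}$. Because each factor is non-expansive, a coordinate once driven below $\varepsilon$ never grows again, so $|P x_i| < \varepsilon$ for every $i$. Thus $(P x_1, \ldots, P x_k)$ lies within $\sqrt{k}\,\varepsilon$ of $(o, \ldots, o)$, and letting $\varepsilon \to 0$ gives $(o, o, \ldots, o) \in X_H$.

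The main obstacle is producing one sequence of projections that works for all $k$ points simultaneously, and this is precisely what non-expansiveness resolves, since the smallness of already-handled coordinates is never destroyed by later projections; the conceptual heart is therefore the compactness-plus-minimization argument in the single-point step. I would also note an alternative to that step: by the von Neumann--Halperin alternating-projection theorem, if $H_1, \ldots, H_m$ are the hyperplanes of $\mathcal{H}$ then the iterate $(P_{H_1} \cdots P_{H_m})^n$ converges to the orthogonal projection onto $\bigcap_i H_i = \{o\}$, that is, to the zero map; in finite dimensions this is operator-norm convergence, so applying the iterate to $X$ yields the claim directly.
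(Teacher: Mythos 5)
Your proof is correct and rests on the same idea as the paper's: the orbit closure is compact and invariant under each projection, and a minimizer of a norm functional on it must be fixed by every $P_H$, hence must be the origin since $\bigcap_{H \in \mathcal{H}} H = \{o\}$. The only difference is packaging --- the paper minimizes $f(Y)=\sum_{i=1}^k |y_i|$ over $k$-tuples in one step, while you treat a single point first and then chain the $k$ coordinates via non-expansiveness (your von Neumann--Halperin aside is a clean alternative route) --- but the heart of the argument is identical.
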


\begin{proof}
First, observe that $X_H$
is closed and bounded and, thus, compact. In addition, for any projection $\proj_H : \Re^d \to \Re^d$ onto a hyperplane $H \in \mathcal{H}$, we have $\proj_H(X_H) \subseteq X_H$. Indeed, if $Y=(y_1, y_2, \ldots, y_k)$ can be obtained from $X$ by finitely many orthogonal projections onto hyperplanes from $\mathcal{H}$, then the same is true about $\proj_H(Y)= (\proj_H (y_1), \proj_H(y_2), \ldots, \proj_H(y_k))$. Similarly, if $Y=(y_1, y_2, \ldots, y_k)$ can be obtained as the limit of a sequence of such point sequences $Y^m = (y_1^m, y_2^m, \ldots, y_k^m)$, then the same is true about $\proj_H(Y)$.

Let us define the function $f(Y)=\sum_{i=1}^k |y_i|$ on $\left( \Re^d \right)^k$ for any sequence $Y=(y_1,y_2, \ldots,y_k)$. This function is clearly continuous, and thus, it attains its minimum $\delta$ on $X_H$. If $\delta = 0$, then we have $(o,o,\ldots, o) \in X_H$. Suppose for contradiction that $\delta > 0$. Then, if $\sum_{i=1}^k |y_i| = \delta$  for the sequence $Y=(y_1,y_2, \ldots,y_k) \in X_H$, then, as $\bigcap\mathcal{H}=\{o\}$, there is some $H \in \mathcal{H}$ that does not contain at least one of the $y_i$. But in this case the orthogonal projection $\proj_H : \Re^d \to \Re^d$ onto $H$ strictly decreases the value of $f(Y)$, i.e. we have $f(\proj_H(Y)) < f(Y)$, a contradiction.
\end{proof}

\begin{lemma}\label{lem:whichdim}
For any integers $d \geq 3$ and $1 \leq k \leq \frac{d}{2}$, let
\[
A_d(k)= \binom{k}{\lfloor \frac{k}{2}\rfloor} \cdot \binom{d-k}{\lfloor \frac{d-k}{2}\rfloor},
\]
and let $A_d = \min \left\{ A_d(k) : 1 \leq k \leq \frac{d}{2} \right\}$.
Let $d=4m+r$ for some $r \in \{ 0,1,2,3 \}$. Then 
\begin{enumerate}
\item for $r=0$, $A_d= \binom{2m-1}{m-1} \cdot \binom{2m+1}{m}$, attained iff $k=2m-1$;
\item for $r=1$, $A_d = \binom{2m}{m} \cdot \binom{2m+1}{m}$, attained iff $k=2m-1$ or $k=2m$;
\item for $r \in \{ 2,3 \}$, $A_d = \binom{2m+1}{m} \cdot \binom{2m+r-1}{m+r-2}$, attained iff $k=2m+1$.
\end{enumerate}
\end{lemma}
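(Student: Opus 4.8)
The plan is to reduce the minimisation of $A_d(k)=f(k)\,f(d-k)$, where $f(n):=\binom{n}{\lfloor n/2\rfloor}$, to an analysis of the one-step ratio $A_d(k+1)/A_d(k)$ as $k$ runs over the admissible range $1\le k\le d/2$. First I would record the elementary ratio $\rho(n):=f(n+1)/f(n)$, which a direct computation gives as $\rho(n)=2$ when $n$ is odd and $\rho(n)=\frac{2(n+1)}{n+2}$ when $n$ is even. In particular $\rho(n)\le 2$ for every $n$, with equality exactly for odd $n$, and $\rho$ is strictly increasing on the even integers. Since $A_d(k+1)/A_d(k)=\rho(k)/\rho(d-k-1)$, the whole problem reduces to comparing $\rho$ at the two arguments $k$ and $d-k-1$, whose parities agree if and only if $d$ is odd; this is what forces the split into $d$ even and $d$ odd.

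For $d$ even, $k$ and $d-k-1$ have opposite parity, so exactly one of $\rho(k),\rho(d-k-1)$ equals $2$ and the other is strictly smaller; hence $A_d$ strictly decreases across every even step and strictly increases across every odd step, so the local minima sit at the odd values of $k$. To compare these I would pass to the two-step ratio $A_d(k+2)/A_d(k)=\frac{(k+2)(d-k+1)}{(k+3)(d-k)}$, which is $<1,\,=1,\,>1$ according as $k<\tfrac d2-1$, $k=\tfrac d2-1$, $k>\tfrac d2-1$. Intersecting this with $1\le k\le d/2$ pins down the minimiser: for $d=4m$ the balance point $\tfrac d2-1=2m-1$ is itself odd and its reflected partner $2m+1$ lies outside the range, giving the unique minimiser $k=2m-1$ and $A_{4m}=\binom{2m-1}{m-1}\binom{2m+1}{m}$; for $d=4m+2$ the decrease persists up to $k=2m+1$ while the next odd point leaves the range, giving the unique minimiser $k=2m+1$ and $A_{4m+2}=\binom{2m+1}{m}^2$. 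These match (1) and (3) for $r=2$.

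For $d$ odd, $k$ and $d-k-1$ share a parity. When both are odd we get $\rho(k)=\rho(d-k-1)=2$, so $A_d(k+1)=A_d(k)$ (consecutive values coincide); when both are even, $A_d(k+1)/A_d(k)=\frac{(k+1)(d-k+1)}{(k+2)(d-k)}$ is $<1,\,=1,\,>1$ according as $k<\tfrac{d-1}2$, $k=\tfrac{d-1}2$, $k>\tfrac{d-1}2$. Tracing the resulting chain $A_d(1)=A_d(2)>A_d(3)=A_d(4)>\cdots$ down the range gives, for $d=4m+1$, the two-fold tie $A_d(2m-1)=A_d(2m)$ as the joint minimum with value $\binom{2m}{m}\binom{2m+1}{m}$, and for $d=4m+3$ the single minimiser $k=2m+1$ (its would-be partner $2m+2$ falling outside $[1,d/2]$) with value $\binom{2m+1}{m}\binom{2m+2}{m+1}$, matching (2) and (3) for $r=3$.

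The delicate point is that near $k=d/2$ the product $A_d(k)$ is almost flat, so the conclusion is governed entirely by the parity wobble of the central binomial coefficient together with the one-sided boundary of the admissible range $[1,d/2]$. This is precisely what produces the asymmetry between the cases $r=0$ and $r=2$ and the two-fold tie in the case $r=1$: the minimiser is the lattice point of the correct parity closest to the balance point, and uniqueness (or the tie) is decided by whether the reflected candidate lands inside or outside $[1,d/2]$. The only routine extra work is translating the step- and two-step ratios into the claimed closed forms and checking the small cases $d=3,4$ directly, where the chains degenerate.
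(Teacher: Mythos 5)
Your proposal is correct and follows essentially the same route as the paper: both arguments reduce the problem to showing that $A_d(k)$ decreases (in the appropriate two-step or one-step sense) as $k$ moves toward $d/2$, via elementary ratio computations on the binomial products, and then identify the minimiser by a parity/boundary check near $k=d/2$. Your organisation through the one-step ratio $\rho(n)=f(n+1)/f(n)$ is a slightly cleaner bookkeeping device than the paper's direct two-step ratio $A_d(k+2)/A_d(k)$, and it makes the ties in the case $r=1$ more transparent, but the underlying argument is the same.
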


\begin{proof}
Consider the case that $k \leq \frac{d}{2} -2$. Then an elementary computation yields that
\begin{equation*}
A_d(k+2)=\frac{(k+2)\cdot(k+1)\cdot\lfloor \frac{d-k}{2} \rfloor\cdot\lceil \frac{d-k}{2} \rceil}{\lfloor \frac{k+2}{2} \rfloor\cdot\lceil \frac{k+2}{2} \rceil\cdot(d-k)\cdot(d-k-1)} \cdot A_d(k).
\end{equation*}

Let  $q_d(k) = \frac{A_d(k+2)}{A_d(k)}$.
We show that $q_d(k) < 1$ for all values of $k$ and $d$. We distinguish four cases depending on the parity of $k$ and $d$. If both $k,d$ are even, then
\[
q_d(k) = \frac{(k+1)(d-k)}{(k+2)(d-k-1)}.
\]
From this, $(k+1)+(d-k)=(k+2)+(d-k-1)$ and $0 < k+1 < k+2 < d-k-1 < d-k$ implies that $q_d(k) < 1$. If $k$, $d$ or both are odd, a slight modification of this computation yields the same inequality.  

Now, we prove the assertion. If $r=0$, then, by the previous consideration, $A_d(k)$ is minimal at $k=2m-1$ or $k=2m$. On the other hand,
\[
\frac{A_d(2m)}{A_d(2m-1)} = \frac{2m+2}{2m+1} > 1,
\]
implying the statement in this case. In the cases $r=1$, $r=2$ and $r=3$ a similar computation proves the statement.
\end{proof}

\begin{proof}[Proof of Theorem~\ref{thm:Bushighdimmax}]
The proof is based on a symmetrization procedure, similar to the one in \cite{AFZ}, using shadow systems.

To prove (a), since the investigated quantity is invariant under linear transformations, we can clearly assume that $Q_{d+1}(B)$ is a translate of a regular simplex $S$ centered at the origin $o$, and $B = \conv \left( (x+S \cup (-x-S) \right)$, where $x \in \Red$.

Consider the function $f : \Red \to \Re$, $f(x) = \lambda_d(B)=\lambda_d(\conv \left( (x+S \cup (-x-S) \right))$. Moving $x$ at a constant speed at any fixed direction induces a linear parameter system on the vertices of $B$. Thus, by Lemma~\ref{lem:RS}, $f$ is a convex function. On the other hand, $f$ is invariant under any symmetry of $S$. In particular, it follows that the set of points where $f$ is minimal, is a convex set whose symmetry group contains that of $S$. Thus, $f$ is minimal at $o$. We note that the formula for the volume of $B$ in this special case can be found, e.g. in \cite{RS58_2}.

Now we deal with (b) and (c). Let $Q=Q_{d+2}(B)$. Then we may assume that $B = \conv (Q \cup (-Q))$. Using the characterization of $d$-dimensional convex polytopes with $d+2$ vertices (see e.g. \cite{Ziegler} and the references therein), for some integer $1 \leq k \leq d-1$, $Q$ can be written in the form $\conv \left( (x_1+S_1) \cup (x_2+S_2) \right)$, where
\begin{itemize}
\item[(i)] $S_1$ and $S_2$ are simplices centered at $o$, of dimensions $k$ and $d-k$, respectively;
\item[(ii)] the affine hulls of $x_1+S_1$ and $x_2+S_2$ intersect in a singleton, which is contained in both simplices.
\end{itemize}
Using the affine invariance of the problem, we may assume that $S_1$ and $S_2$ are regular simplices, and their affine hulls are orthogonal to each other. In this case the orthogonal projection of $Q$ onto the affine hull of $x_1+S_1$ is $x_1+S_1$, and the intersection of $Q$ with the affine hull of $x_2+S_2$ is $x_2+S_2$. Thus, by \cite[Lemma]{RS58_2}, the volume of $Q$ is
\[
\lambda_d(Q) = \frac{1}{\binom{d}{k}} \lambda_d(S_1) \lambda_d(S_2),
\]
independently of the position of the intersection point of $x_1+S_1$ and $x_2+S_2$ relative to $S_1$ and $S_2$. Let us define 
\[
V(x_1,x_2) = \lambda_d\left( \conv \left( (x_1+S_1) \cup (x_2+S_2) \cup (-x_1-S_1) \cup (-x_2-S_2) \right) \right).
\]
We intend to find the quantity
\[
V_{\min} (k) = \min \{ V(x_1,x_2) : x_1,x_2 \in \Re^d, (x_1+S_1) \cap (x_2+S_2) \neq \emptyset \}.
\]
Note that $V_{\min} (k) \geq \min \{ V(x_1,x_2) : x_1,x_2 \in \Re^d \}$. Thus, to find $V_{\min}(k)$ it is sufficient to show that $V(x_1,x_2)$ is minimal over $x_1,x_2 \in \Re^d$ if $x_1=x_2=o$. Let $H$ be a hyperplane of $\Re^d$ bisecting any edge of any of the simplices $S_1$ or $S_2$. Let $\proj_H : \Re^d \to H$ denote the orthogonal projection of $\Red$ onto $H$. Let $v$ denote a unit normal vector of $H$.

We show that $V(x_1,x_2) \geq V(\proj_H(x_1), \proj_H(x_2))$. To do it, we define a shadow system. Let the signed distance of $x_1$ and $x_2$ from $H$ be $\lambda_1$ and $\lambda_2$, respectively, where the sign is determined by $v$. For any $t \in \Re$ and $i \in \{ 1,2 \}$, set $x_i(t) = x_i + \lambda_i (t-1) v$, and
\[
K(t) = \conv \left( (x_1(t)+S_1) \cup (x_2(t)+S_2) \cup (-x_1(t)-S_1) \cup (-x_2(t)-S_2) \right).
\]
Then $K(t)$ is a shadow system, and thus, the function $t \mapsto \lambda_d(K(t))$ is convex by Lemma~\ref{lem:RS}. On the other hand, this function is an even function of $t$, as for any value of $t$, $K(-t)$ is the reflection of $K(t)$ to $H$. Thus, $\lambda_d(K(t))$ is minimal if $t=0$, implying, in particular, that $\lambda_d(K(1)) \geq \lambda_d(K(0))$.  But $\lambda_d(K(1)) = V(x_1,x_2)$ and $\lambda_d(K(0)) = V(\proj_H(x_1), \proj_H(x_2))$, which yields our statement.

Now, let $\mathcal{H}$ denote the family of the above bisectors of all edges of $S_1$ and $S_2$. Observe that for any $\{ i,j \} = \{ 1,2 \}$, the intersection of the bisectors of the edges of $S_i$ is the orthogonal complement of the linear hull of $S_i$, namely it is the linear hull of $S_j$. Thus, the only common point of the bisectors of the edges of both $S_1$ and $S_2$ is the origin $o$. Then, by Lemma~\ref{lem:projections}, starting with the pair $(x_1,x_2)$, the pair $(o,o)$ belongs to the closure of the set of pairs that can be obtained by finitely many projections onto elements of $\mathcal{H}$. This, our previous argument, combined with the observation that $V(x_1,x_2)$ is a continuous function of $(x_1,x_2)$, yields that $V(x_1,x_2) \geq V(o,o)$ for any $x_1,x_2 \in \Re^d$, or in other words, $V_{\min}(k) = V(o,o)$.

Next, we note that by \cite{RS58_2}, we have
\[
V(o,o)= \frac{1}{\binom{d}{k}} \lambda_d( \conv (S_1 \cup (-S_1))) \lambda_d(\conv (S_2 \cup (-S_2))).
\]
Furthermore, again by \cite{RS58_2}, we have
\[
\lambda_d( \conv (S_1 \cup (-S_1))) = \binom{k}{\left[ \frac{k}{2}\right]} \lambda_d(S_1), \hbox{ and}
\]
\[
\lambda_d( \conv (S_2 \cup (-S_2))) = \binom{d-k}{\left[ \frac{d-k}{2}\right]} \lambda_d(S_2).
\]
Combining these inequalities, we obtain that for any $o$-symmetric convex body $K$ in $\Re^d$, we have
\[
\mu_{d+2}^{Bus}(K) \leq \frac{\kappa_d}{\binom{k}{\left[ \frac{k}{2}\right]} \cdot \binom{d-k}{\left[ \frac{d-k}{2}\right]}}
\]
for some $1 \leq k \leq d-1$.
Thus the assertion in (2) and (3) of Theorem~\ref{thm:Bushighdimmax} follows from Lemma~\ref{lem:whichdim}.
\end{proof}

\section{Extremal values of the Holmes-Thompson volumes of inscribed polytopes}\label{sec:HT}

In this section, we intend to find the extremal values of
\[
\mu_n^{HT}(B)= \frac{\lambda_d(Q_n(B)) \lambda_d(B^{\circ})}{\kappa_d}.
\]
over all $B \in \Kdo$.

First, we collect our planar results.

\begin{theorem}\label{thm:HTplane}
We have the following.
\begin{itemize}
\item[(1)] For any even integer $n \geq 4$, we have $M^{HT}(n,2) = \frac{n^2}{\pi}\sin^2 \frac{\pi}{n}$, and $M^{HT}(n,2) = \mu_n^{HT}(B)$ for $B \in \Ktwoo$ if $B$ is a regular $o$-symmetric $n$-gon.
\item[(2)] We have $m^{HT}(4,2) = \frac{6}{\pi}$, and $m^{HT}(4,2) = \mu_4^{HT}(B)$ for $B \in \Ktwoo$ if $B$ is an affinely regular hexagon.
\end{itemize}
\end{theorem}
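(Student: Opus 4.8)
The plan is to exploit the explicit formula $\mu_n^{HT}(B)=\frac1\pi\,\lambda_2(Q_n(B))\,\lambda_2(B^{\circ})$ from Remark~\ref{rem:volformulas} (recall $\kappa_2=\pi$), together with polar duality, so as to reduce each statement to a sharp extremal inequality for planar $o$-symmetric bodies. Two facts will be used repeatedly: since $Q_n(B)\subseteq B$ we have $B^{\circ}\subseteq Q_n(B)^{\circ}$, hence $\lambda_2(B^{\circ})\le\lambda_2(Q_n(B)^{\circ})$; and, by affine invariance (Remark~\ref{rem:affineinvariance}), extremizers may be normalized freely.

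For part~(1) the achievability is immediate: if $B$ is a regular $o$-symmetric $n$-gon then $Q_n(B)=B$, so $\mu_n^{HT}(B)=\frac1\pi\lambda_2(B)\lambda_2(B^{\circ})$, and taking circumradius $1$ gives $\lambda_2(B)=n\sin\frac\pi n\cos\frac\pi n$ and $\lambda_2(B^{\circ})=n\tan\frac\pi n$, whose product is $n^2\sin^2\frac\pi n$. For the reverse inequality I would reduce to the case where $B$ is itself an $o$-symmetric $n$-gon. The decisive point, which uses that $n$ is even, is a \emph{Lemma}: a maximal area $n$-gon inscribed in an $o$-symmetric body can be chosen centrally symmetric. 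For strictly convex $B$ this follows from the optimality condition that the tangent at each vertex be parallel to the chord joining its two neighbours, since the two supporting lines of $B$ in a fixed direction touch at an antipodal pair of points; the general case follows by approximation. Granting this, let $P$ be a centrally symmetric maximizer and replace $B$ by the $o$-symmetric $n$-gon $B'=P$: then $Q_n(B')=B'$, so $\lambda_2(Q_n(B'))=\lambda_2(Q_n(B))$ and $\lambda_2(B'^{\circ})\ge\lambda_2(B^{\circ})$, whence $\mu_n^{HT}(B')\ge\mu_n^{HT}(B)$. On $o$-symmetric $n$-gons the functional is $\frac1\pi$ times the volume product, and I would finish by invoking the theorem of Meyer and Reisner (proved via shadow systems) that among $o$-symmetric polygons with at most $n=2m$ vertices the volume product is maximal exactly for affinely regular $n$-gons, with value $n^2\sin^2\frac\pi n$.

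For part~(2) I specialize to $n=4$, where a planar cross-polytope is a centrally symmetric quadrilateral, i.e.\ a parallelogram. The same tangent-versus-diagonal argument shows that in an $o$-symmetric body the maximal area inscribed quadrilateral may be taken centrally symmetric, so $\lambda_2(Q_4(B))=\lambda_2(I(B))$. Every parallelogram has volume product exactly $8$, and polarity carries the centered parallelograms inscribed in $B$ bijectively onto the centered parallelograms circumscribed about $B^{\circ}$, reversing inclusion; hence the maximal inscribed parallelogram of $B$ polarizes to the minimal circumscribed parallelogram of $B^{\circ}$, that is $I(B)^{\circ}=C(B^{\circ})$ and $\lambda_2(I(B))\,\lambda_2(C(B^{\circ}))=8$. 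This lets me rewrite
\[
\mu_4^{HT}(B)=\frac{1}{\pi}\,\lambda_2(I(B))\,\lambda_2(B^{\circ})=\frac{8}{\pi}\cdot\frac{\lambda_2(B^{\circ})}{\lambda_2(C(B^{\circ}))}.
\]
Writing $A=B^{\circ}$ and using that polarity is a bijection of $\Ktwoo$, minimizing $\mu_4^{HT}(B)$ over $\Ktwoo$ is equivalent to maximizing the ratio $\lambda_2(C(A))/\lambda_2(A)$ over $A\in\Ktwoo$. Substituting the extremal value $\tfrac43$ of this ratio then yields $m^{HT}(4,2)=\frac{8}{\pi}\cdot\frac34=\frac{6}{\pi}$, attained when $A$ is an affinely regular hexagon, equivalently (its polar being again one) when $B$ is an affinely regular hexagon.

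The crux of part~(2) is therefore the sharp affine-invariant inequality $\lambda_2(C(A))\le\frac43\lambda_2(A)$ for every $A\in\Ktwoo$, with equality precisely for affinely regular hexagons. A minimal circumscribed parallelogram $P=C(A)$ meets $A$ at the midpoints $\pm u,\pm v$ of its sides, so $A\supseteq\conv(\pm u,\pm v)$; but this first-order information only gives the weaker bound $\lambda_2(P)\le 2\lambda_2(A)$, and the extremal configuration is the affinely regular hexagon $\conv(\pm u,\pm v,\pm(u+v))$, whose area is exactly $\tfrac34\lambda_2(P)$. I expect the genuine difficulty to lie here: upgrading the local optimality conditions to the global sharp constant $\tfrac43$ and pinning down the hexagonal equality case requires a careful analysis (the disk, for instance, satisfies the inequality strictly), and I would prove or cite it as a classical sharp estimate for the minimal circumscribed parallelogram. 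The second obstacle, feeding into part~(1), is establishing central symmetry of a maximal inscribed $2m$-gon for general even $n$, where the clean antipodal pairing available for quadrilaterals is no longer immediate and a genuine symmetrization argument is needed.
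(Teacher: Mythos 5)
Part~(1) of your proposal is correct and follows essentially the same route as the paper: symmetrize the maximal inscribed $n$-gon, pass from $B$ to $Q_n(B)$ using $B^{\circ}\subseteq Q_n(B)^{\circ}$, and invoke the Meyer--Reisner theorem on the maximal volume product of $o$-symmetric $n$-gons. One caveat: your sketched justification of the central-symmetry of a maximal inscribed $2m$-gon is not a proof --- the first-order condition that each vertex's supporting line be parallel to the chord of its neighbours holds at every vertex of any locally maximal inscribed polygon, but it does not by itself pair the vertices into antipodal couples. This symmetrization is genuinely nontrivial; it is exactly Lemma~\ref{lem:Dowker} (Fejes T\'oth--Fejes T\'oth), so you should cite it rather than rederive it.

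Part~(2) takes a genuinely different and rather elegant route: the polarity $I(B)^{\circ}=C(B^{\circ})$ together with $\lambda_2(P)\lambda_2(P^{\circ})=8$ for centered parallelograms correctly rewrites $\mu_4^{HT}(B)$ as $\frac{8}{\pi}\cdot\frac{\lambda_2(A)}{\lambda_2(C(A))}$ with $A=B^{\circ}$, and all the duality bookkeeping checks out (including that a minimal circumscribed parallelogram of a symmetric body may be taken centered). However, the resulting inequality $\lambda_2(C(A))\le\frac43\lambda_2(A)$, with equality exactly for affinely regular hexagons, is --- by your own reduction --- \emph{equivalent} to the statement being proved, and you leave it unproven, proposing to ``prove or cite it as a classical sharp estimate.'' This is a genuine gap: the first-order condition (midpoints of the sides of $C(A)$ lie on $\bd(A)$) only yields the constant $2$, as you note, and without an independent proof or an actual reference the argument is circular. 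The paper's entire proof of part~(2) is, in effect, a proof of this fact in its primal form: it reduces to $o$-symmetric polygons, shows that every vertex of a minimizer lies on more than one maximal inscribed parallelogram (so that $\bd(\hat B)$ is a Radon curve), uses shadow systems together with the Meyer--Reisner convexity of $t\mapsto 1/\lambda_2(C(t)^{\circ})$ (Lemma~\ref{lem:MR}) to carry out the perturbations, upgrades the Radon property to affine regularity via a midpoint condition and Coxeter's characterization (Lemma~\ref{lem:Coxeter}), and finishes with a monotonicity computation over regular $(4k+2)$-gons singling out the hexagon. Some substitute for this chain of arguments must be supplied before part~(2) can be considered proved.
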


In the proof we need three lemmas. The first one, which is a generalization of a classical result of Dowker \cite{Dowker}, appeared in \cite{GFTandLFT}.

\begin{lemma}\label{lem:Dowker}
Let $K \in \Ktwoo$. Then, for every $m \geq 2$, among the maximum area $(2m)$-gons inscribed in $K$, there is an $o$-symmetric $(2m)$-gon.
\end{lemma}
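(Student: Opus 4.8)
The plan is to prove the lemma by a symmetrization argument: starting from an \emph{arbitrary} maximum area $(2m)$-gon inscribed in $K$, I would produce an $o$-symmetric inscribed $(2m)$-gon of at least as large area, which by maximality must then itself be a maximum area inscribed $(2m)$-gon. First I would record the existence of a maximizer: the inscribed $(2m)$-gons form a compact family (one may assume all vertices lie on $\bd K$, a compact set) and area is continuous, so a maximum area inscribed $(2m)$-gon $P = \conv\{p_1,\ldots,p_{2m}\}$ exists, with vertices labelled in cyclic order. Since $K \in \Ktwoo$, the reflected polygon $-P$ is also of maximum area, and I may assume $o \in \inter P$ (otherwise $P$ would lie in a halfplane bounded by a line through $o$, which is incompatible with maximality). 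This central symmetry of $K$ together with the maximality of $P$ is the structure the whole argument exploits.

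The heart of the proof is the following symmetrization. Choose a line $\ell$ through $o$ that splits the $2m$ vertices into two arcs of exactly $m$ consecutive vertices each; such a line exists by a discrete intermediate value argument as the direction of $\ell$ rotates, since the number of vertices in an open halfplane bounded by $\ell$ changes by one at a time and equals $m$ for its antipodal direction. Writing $B_1,B_2$ for the two blocks, I form the two $o$-symmetric inscribed polygons $R_1 = \conv(B_1 \cup (-B_1))$ and $R_2 = \conv(B_2 \cup (-B_2))$. Because each block lies in an \emph{open} halfplane bounded by $\ell$, its angular span is less than $\pi$, so each $R_i$ really is an $o$-symmetric $(2m)$-gon with the expected cyclic vertex order. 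The key claim is that, for a suitable choice of $\ell$, $\lambda_2(R_1) + \lambda_2(R_2) \ge 2\lambda_2(P)$; granting this, one of $R_1,R_2$ has area at least $\lambda_2(P)$, and since it is a symmetric inscribed $(2m)$-gon, we are done.

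To verify the claim I would expand all three areas by the shoelace formula. Writing the four vertices adjacent to the two points where $\bd P$ crosses $\ell$ as $p_1,p_m$ (ends of $B_1$) and $p_{m+1},p_{2m}$ (ends of $B_2$), the contributions of the two arcs are identical to the corresponding contributions in $P$, and the entire inequality collapses to the single two-dimensional inequality $\det\!\left(p_1+p_{m+1},\, p_m+p_{2m}\right) \ge 0$, i.e. the determinant of the $2\times 2$ matrix whose columns are the two indicated plane vectors is nonnegative. For $m=2$ this is immediate: the two admissible $m$--$m$ splits are both realizable by lines through $o$ (using $o \in \inter P$), and the two associated determinants are exact negatives of one another, so one of them is nonnegative, and the corresponding symmetrization works.

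I expect the main obstacle to be the general case $m \ge 3$. Here the determinant above need \emph{not} be nonnegative for every split — indeed it can be made negative for a badly chosen (non-maximal) configuration — so the freedom in choosing $\ell$ must be used carefully, and the simple ``two complementary splits cancel'' identity of the planar $m=2$ case no longer closes the argument. I would handle this by an averaging/continuity argument over the rotating line $\ell$, in the spirit of Dowker's concavity argument for maximum area inscribed polygons (of which this lemma is a symmetric refinement), invoking the maximality of $P$ together with the central symmetry of $K$ to guarantee that at least one admissible split yields a nonnegative determinant. Making this selection step rigorous — identifying the correct invariant that forces a good split to exist for every $m$ — is the crux of the proof.
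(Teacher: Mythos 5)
This lemma is not proved in the paper at all: it is quoted from the Fejes T\'oth--Fejes T\'oth note \cite{GFTandLFT} (a symmetric refinement of Dowker's theorem), so there is no in-paper argument to compare with. Your plan is a reasonable Dowker-type recombination, and your reduction is correct: if the vertices of a maximal $P$ in counterclockwise order are split into consecutive blocks $B_1=\{p_j,\dots,p_{j+m-1}\}$ and $B_2=\{p_{j+m},\dots,p_{j-1}\}$ lying in open half-planes bounded by a line through $o$, then indeed $\lambda_2(R_1)+\lambda_2(R_2)-2\lambda_2(P)=\det\left(p_j+p_{j+m},\,p_{j+m-1}+p_{j-1}\right)$, and for $m=2$ the two splits give opposite determinants, so the argument closes (modulo the realizability of both splits, which your observation that $o\in\inter P$ does supply, though the justification of $o\in\inter P$ itself needs a line such as $\lambda_2(Q_4(K))\ge\lambda_2(I(K))>\tfrac12\lambda_2(K)$).

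The genuine gap is the step you yourself flag: for $m\ge 3$ you never establish that some admissible split has nonnegative determinant, and this is the entire content of the lemma there, not a technicality. Worse, the natural repair you propose (averaging over the rotating line) does not work as stated: writing $q_j=p_j+p_{j+m}$, the $m$ cyclic splits have determinants $D_j=\det(q_j,q_{j-1})$, so $\sum_{j=1}^m D_j$ equals $-2$ times the signed area of the closed polygon $q_1q_2\cdots q_m$, and this sum is negative for generic inscribed $2m$-gons (e.g.\ six points at angles $0,1,\dots,5$ on the unit circle give a positively oriented triangle $q_1q_2q_3$). Hence no split is forced to work by averaging alone; one must genuinely invoke the maximality of $P$, or some unproved structural fact about the configuration $q_1,\dots,q_m$ (that it cannot wind positively around $o$), to select a good split --- and the splits realizable by lines through $o$ form only a subfamily of the $m$ combinatorial splits, so even a sign argument over all splits would not immediately suffice. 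Until this selection step is supplied, the proof is incomplete for every $m\ge 3$.
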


For the next lemma, appearing as \cite[Theorem 1]{MR} in a paper of Meyer and Reisner, recall that for every convex body $K \subset \Re^d$, there is a unique point $s(K) \in \inter(K)$, called the \emph{Santal\'o point} of $K$, for which
the quantity $\lambda_d\left( (K-s(K))^\circ\right)$ is minimal. We note that if $K$ is $o$-symmetric, then we have $s(K)=o$.

\begin{lemma}\label{lem:MR}
Let $C(t)$ be a shadow system as defined in Definition~\ref{defn:shadowsystem}. Assume that for any $t \in [a,b]$, the set $C(t)$ has nonempty interior. Then the function
\[
t \mapsto \frac{1}{\lambda_d \left( (C(t) - s(C(t)))^\circ \right)}, t \in [a,b]
\]
is convex.
\end{lemma}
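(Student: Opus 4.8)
My plan is to reduce the statement to a convexity property of central sections of a single fixed convex body in one higher dimension, which can then be read off from Busemann's theorem. Write $v$ for the direction of the shadow system as in Definition~\ref{defn:shadowsystem}, and recall that the Santal\'o point is characterized by
\[
\lambda_d\bigl((C(t)-s(C(t)))^\circ\bigr)=\min_{p}\lambda_d\bigl((C(t)-p)^\circ\bigr),
\]
so that the quantity to be studied is $\max_{p}\,1/\lambda_d((C(t)-p)^\circ)$. I first record why the centering cannot be dropped: for a \emph{fixed} $p$ the map $t\mapsto 1/\lambda_d((C(t)-p)^\circ)$ is in general \emph{not} convex. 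Already for $d=1$ it equals $(p-a(t))(b(t)-p)/(b(t)-a(t))$ with $C(t)=[a(t),b(t)]$, $a$ concave and $b$ convex, and this need not be convex. Hence the optimization over the centre is essential, and a naive ``supremum of convex functions'' argument is doomed.

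The main device is to lift the shadow system. Set $\tilde C=\conv\{(x,\lambda_x):x\in X\}\subset\Re^{d}\times\Re=\Re^{d+1}$ and let $P_t:\Re^{d+1}\to\Re^{d}$ be the linear map $P_t(z,\zeta)=z+t\zeta v$. Since $P_t(x,\lambda_x)=x+t\lambda_x v$, we have $C(t)=P_t(\tilde C)$, and translating the centre by $p$ amounts to replacing $\tilde C$ by $\tilde C-(p,0)$, because $P_t(p,0)=p$. Dualizing, for any convex body $D\subset\Re^{d+1}$ with $o\in\inter D$ one has $(P_tD)^\circ=(P_t^{*})^{-1}(D^\circ)$, where $P_t^{*}y=(y,t\langle y,v\rangle)$ is injective with image the hyperplane $V_t=u_t^{\perp}$, $u_t=(tv,-1)$. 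Thus $(C(t)-p)^\circ$ is the $P_t^{*}$-preimage of the central section $(\tilde C-(p,0))^\circ\cap V_t$. A short computation gives $(P_t^{*})^{\top}P_t^{*}=\Id+t^2\,vv^{\top}$, so the volume distortion of $P_t^{*}$ equals $\sqrt{1+t^2|v|^2}=|u_t|$, and therefore
\[
\frac{1}{\lambda_d\bigl((C(t)-p)^\circ\bigr)}=\frac{|u_t|}{\lambda_d\bigl((\tilde C-(p,0))^\circ\cap u_t^{\perp}\bigr)}.
\]

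The right-hand side is exactly the value at $u_t$ of the Minkowski gauge of the intersection body of $(\tilde C-(p,0))^\circ$, i.e. of the star body whose radial function in direction $u$ is $\lambda_d\bigl((\tilde C-(p,0))^\circ\cap u^{\perp}\bigr)$. When the relevant translate of $\tilde C$ is $o$-symmetric, Busemann's theorem asserts that this intersection body is convex, so its gauge is a norm, hence a convex function on $\Re^{d+1}$; restricting it to the affine line $t\mapsto u_t=(tv,-1)$ gives convexity of $t\mapsto 1/\lambda_d((C(t)-p)^\circ)$ in one stroke. This disposes of the $o$-symmetric situation, where $p=o=s(C(t))$. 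In the general case $(\tilde C-(p,0))^\circ$ is not symmetric, Busemann's theorem does not apply to an individual centre — consistent with the non-convexity noted above — and one must genuinely use that the centre is being optimized.

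The hard part is precisely this last point: to show that $t\mapsto\max_{p}\,|u_t|/\lambda_d\bigl((\tilde C-(p,0))^\circ\cap u_t^{\perp}\bigr)$ is convex, which is a \emph{Busemann-type theorem for sections taken through the Santal\'o point of a not necessarily symmetric body}. My plan here is to let the minimization over $p$ play the role of a symmetrization: the Santal\'o centering forces the centroid of each section $(\tilde C-(p,0))^\circ\cap V_t$ into the correct position, and one then invokes the form of Busemann's inequality valid for such centred sections, together with the fact that $u_t$ is affine in $t$, to recover convexity of the optimized reciprocal. An alternative, analytic route is to start from the identity $\lambda_d((K-p)^\circ)=\tfrac1{d!}\int_{\Re^d}e^{\langle p,y\rangle-h_K(y)}\,dy$ and exploit that $t\mapsto h_{C(t)}(y)$ is convex (being a maximum of affine functions of $t$), so the integrand is log-concave in $t$ for each fixed $y$; one would then feed this into Pr\'ekopa's theorem on the concavity of $\log\int e^{F}$ for jointly concave $F$, interchanging the minimization over $p$ with the $t$-behaviour. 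In either approach the single real obstacle is orienting the resulting inequality correctly once the centre optimization is brought in, since, as the fixed-centre computation shows, the desired convexity holds only after passing to the Santal\'o point.
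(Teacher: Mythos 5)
The paper offers no proof of this lemma to compare against: it is imported verbatim as Theorem 1 of Meyer and Reisner \cite{MR}. Judged on its own terms, your attempt contains a genuine gap, and it sits exactly where you yourself write that ``the hard part is precisely this last point''. What you do prove is correct and worthwhile: the lift $\tilde C=\conv\{(x,\lambda_x):x\in X\}\subset\Re^{d+1}$, the identity
\[
\frac{1}{\lambda_d\left((C(t)-p)^\circ\right)}=\frac{|u_t|}{\lambda_d\left((\tilde C-(p,0))^\circ\cap u_t^{\perp}\right)},\qquad u_t=(tv,-1),
\]
and the deduction of the $o$-symmetric case from Busemann's theorem on intersection bodies are all sound (this is essentially the Campi--Gronchi argument), and your one-dimensional example correctly shows that for fixed $p$ the function $t\mapsto 1/\lambda_d((C(t)-p)^\circ)$ need not be convex, so the ``supremum of convex functions'' shortcut is genuinely unavailable. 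But the lemma is stated for arbitrary shadow systems, and for those you offer only two unexecuted plans, neither of which goes through as described. In the first, once $p=s(C(t))$ varies with $t$, the sections $(\tilde C-(p,0))^\circ\cap u_t^{\perp}$ are central sections of a $t$-dependent family of bodies, so there is no single non-symmetric body whose intersection body you could try to prove convex, and there is no off-the-shelf ``Busemann inequality for centred sections'' of the strength you would need to invoke. In the second, the exponent $\langle p,y\rangle-h_{C(t)}(y)$ is not jointly concave in $(t,p,y)$ (both $\langle p,y\rangle$ and $t\lambda_x\langle v,y\rangle$ are bilinear), so Pr\'ekopa's theorem does not apply; and even granting some log-concavity, you need a \emph{minimum} over $p$, and minima of log-concave families are not log-concave, so ``interchanging the minimization over $p$ with the $t$-behaviour'' is not a step but the entire difficulty restated.

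Two further remarks. First, Meyer and Reisner's actual proof of the general statement is a substantive argument that does not reduce to either of your routes, so the missing part cannot be waved through as routine. Second, every application of Lemma~\ref{lem:MR} in this paper (the symmetric deformation of $\hat B$ in the proof of Theorem~\ref{thm:HTplane}, and the modification of the proof of Theorem~\ref{thm:Bushighdimmax}(a) described after it) involves shadow systems of $o$-symmetric bodies with $s(C(t))=o$ throughout; for that restricted statement your Busemann argument is a complete proof. If you only need the symmetric case, state and prove that; for the lemma as stated, cite \cite{MR}.
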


Finally, we need the following theorem of Coxeter \cite{Coxeter} (for a more general version, see \cite{Langi}).

\begin{lemma}\label{lem:Coxeter}
   Let $p_1,\dots,p_n$ be the vertices of an $n$-gon $P$ in $\mathbb{R}^2$, in cyclic order. If there is some real number $\tau\geq 0$ such that $p_{j+2}-p_{j-1}=\tau(p_{j+1}-p_{j})$ for $j = 1,\dots,n$, then $P$ is affinely regular. 
\end{lemma}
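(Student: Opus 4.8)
The plan is to translate the hypothesis into a linear recurrence for the edge vectors and then to solve it explicitly after identifying $\mathbb{R}^2$ with $\mathbb{C}$. Writing $e_j = p_{j+1}-p_j$ for the edge vectors, with all indices read modulo $n$, a telescoping computation gives $p_{j+2}-p_{j-1} = e_{j+1}+e_j+e_{j-1}$ and $p_{j+1}-p_j = e_j$, so the hypothesis becomes the three-term recurrence
\[
e_{j+1}+e_{j-1} = (\tau-1)\,e_j, \qquad j=1,\dots,n .
\]
Setting $c=\tau-1$, this reads $e_{j+1}=c\,e_j-e_{j-1}$, a second-order linear recurrence with constant coefficients whose characteristic equation $x^2-cx+1=0$ has roots $\mu,\mu^{-1}$ with product $1$.

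First I would rule out the degenerate cases. The general solution is $e_j=A\mu^j+B\mu^{-j}$ (with the usual modification when $\mu=\mu^{-1}$), and two global constraints are available: the $e_j$ are $n$-periodic, and a closed polygon satisfies $\sum_{j=1}^n e_j=0$. If $\mu$ were real with $\mu\neq\pm1$, then $\mu^n\neq1$ forces $A=B=0$; the double-root values $\mu=\pm1$ (that is, $c=\pm2$) give arithmetic or alternating edge sequences that either fail to close up or collapse $P$ to a segment, and moreover $\mu=-1$ corresponds to $\tau=-1<0$, excluded by hypothesis. Hence the only nondegenerate possibility is that $\mu$ lies on the unit circle with $\mu^n=1$, so $\mu=e^{2\pi i k_0/n}$ for some integer $1\le k_0\le n-1$ and $c=2\cos(2\pi k_0/n)$; note that $\tau\ge0$, i.e. $c\ge -1$, constrains the admissible frequencies.

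Next I would integrate the edges to recover the vertices. Summing the geometric series in $p_j=p_0+\sum_{l=0}^{j-1}e_l$ and using $\mu^{-1}=\overline{\mu}$ yields
\[
p_j=\gamma+A'\mu^{j}+B'\,\overline{\mu^{j}}
\]
for suitable constants $\gamma,A',B'\in\mathbb{C}$. The map $z\mapsto\gamma+A'z+B'\bar z$ is an affine transformation of $\mathbb{R}^2$, nonsingular precisely when $|A'|\neq|B'|$; since $P$ is a genuine $n$-gon its vertices are not collinear, so this holds. As the points $\mu^{j}$ are the vertices of a regular $n$-gon, $P$ is the image of a regular $n$-gon under a nonsingular affine map, hence affinely regular, provided $k_0=1$.

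It remains to pin down the frequency $k_0$, which I expect to be the only delicate point. If $\gcd(k_0,n)=d>1$, then $\mu$ has order $n/d<n$, so the $p_j$ take only $n/d$ distinct values and $P$ is not a genuine $n$-gon; thus $\gcd(k_0,n)=1$. Finally, the hypothesis that $p_1,\dots,p_n$ are in cyclic order means that these points, which all lie on the ellipse $\{\gamma+A'z+B'\bar z:|z|=1\}$ and advance along it by the constant angular step $2\pi k_0/n$, must traverse it as a simple (non-self-intersecting) loop; this excludes the star-polygon solutions and forces $k_0=1$ (equivalently $k_0=n-1$ after reorienting), with $\tau\ge0$ consistent with, and in borderline cases enforcing, this choice. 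With $k_0=1$ we conclude that $P$ is affinely regular. The main obstacle is making this winding-number argument rigorous: one must carefully show that the cyclic-order assumption rules out every $k_0>1$ while the non-degeneracy assumption rules out $\gcd(k_0,n)>1$.
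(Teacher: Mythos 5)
The paper does not prove this lemma: it is imported from Coxeter \cite{Coxeter} (with a generalization in \cite{Langi}), so there is no internal argument to compare yours against. Judged on its own, your reduction is the classical one and is correct as far as it goes: the identity $p_{j+2}-p_{j-1}=e_{j-1}+e_j+e_{j+1}$ turns the hypothesis into the recurrence $e_{j+1}=(\tau-1)e_j-e_{j-1}$, the characteristic roots are reciprocal, periodicity together with the closing condition (and $\tau\geq 0$ for the root $-1$) kills the real-root cases, and summation gives $p_j=\gamma+A'\mu^j+B'\overline{\mu^j}$ with $\mu=e^{2\pi i k_0/n}$ and $|A'|\neq|B'|$, i.e.\ a nonsingular affine image of the cyclically indexed powers of $\mu$.

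The one genuine gap is exactly the step you flag: excluding $2\leq k_0\leq n-2$. This does require an argument, because $\tau\geq 0$ alone does not suffice (for $n=7$, $k_0=2$ one has $\tau=1+2\cos(4\pi/7)>0$, and the heptagram satisfies the recurrence), so the cyclic-order hypothesis must carry the load. It can be closed along the lines you sketch: since $|A'|\neq|B'|$, the map $z\mapsto\gamma+A'z+B'\bar z$ sends the unit circle bijectively onto an ellipse and preserves cyclic order on it up to reversal, so $p_1,\dots,p_n$ are in cyclic order if and only if the points $e^{2\pi i k_0 j/n}$ are in cyclic order on the circle. Each step advances the argument by the constant amount $2\pi k_0/n\in(0,2\pi)$, so the $n$ positively oriented arcs between consecutive points have total length $2\pi k_0$; but for $n$ distinct points listed in cyclic order these arcs partition the circle and their lengths sum to exactly $2\pi$. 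Hence $k_0=1$ (or $k_0=n-1$ after reversing orientation), and $P$ is a nonsingular affine image of a regular convex $n$-gon. Note that this argument also subsumes your separate case $\gcd(k_0,n)>1$, since repeated points cannot be $n$ distinct vertices in cyclic order. With that paragraph supplied, your proof is complete.
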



\begin{proof}[Proof of Theorem~\ref{thm:HTplane}]
First we prove (1). By Lemma~\ref{lem:Dowker}, we may assume that $Q_n(B)$ is $o$-symmetric. Since $A \subseteq B$ implies $B^{\circ} \subseteq A^\circ$, we have
$\mu_n^{HT}(B) \leq \mu_n^{HT}(Q_n(B))$. Thus, we may assume that $B=Q_n(B)$ is an $o$-symmetric $n$-gon. In this case $\mu_n^{HT}(B)$ is equal to the volume product of $B$, which is maximal if $B$ is an affinely regular convex $n$-gon (see \cite{MR2}, or \cite{AFZ} for an alternative proof).

Next, we prove (2). First, we find the minimum of $\mu_4^{HT}(B)$ in the family $\mathcal{P}_o^k$ of $o$-symmetric convex polygons with at most $k$ vertices, for an arbitrary fixed even integer $k \geq 6$. Note that by Lemma~\ref{lem:Dowker}, we may assume that $Q=Q_4(B)$ is an $o$-symmetric parallelogram in $B$. First, note that by compactness, the minimum of $\mu_4^{HT}(B)$ is attained at some $o$-symmetric polygon $\hat{B} \in \mathcal{P}_o^k$ for any value of $k$. In addition, computing the value of $\mu_4^{HT}(B)$ for a parallelogram and a regular hexagon as $B$, we see that $\hat{B}$ has at least six sides. 
We show that to find the minimum of $\mu_4^{HT}(B)$ in $\mathcal{P}_o^k$ it is sufficient to consider the affinely regular $o$-symmetric $k'$-gons for some even integer $6 \leq k' \leq k$.

Consider the case that a vertex $p$ of $\hat{B}$ is not a vertex of any maximum area parallelogram inscribed in $\hat{B}$. Then we may move $p$ and $-p$ in a direction that does not change $\lambda_2(Q_4(\hat{B}))$, but decreases $\lambda_2(\hat{B}^\circ)$. Since in this case $\mu_n^{HT}(\hat{B})$ decreases, we have a contradiction. Hence, in the remaining part we assume that every vertex of $\hat{B}$ is a vertex of at least one maximum area parallelogram inscribed in $\hat{B}$.

Let the vertices of $\hat{B}$ be $q_1, q_2, \ldots, q_{k'}$, where $k' \leq k$ is even; note that by the definition of $\mathcal{P}_o^k$ $\hat{B}$ may have stricly less vertices than $k$. Consider a maximum area parallelogram $Q$ in $\hat{B}$, with vertices $q_1, q_i, -q_1, -q_i$. Assume that $Q$ is the only maximum area parallelogram in $\hat{B}$ with $q_1$ as one of its vertices; this property is equivalent to the property that the supporting lines of $\hat{B}$ parallel to $[o,q_1]$ contain only $q_i$ and $-q_i$, respectively. Now we move $q_1$ and $-q_1$ symmetrically in a certain direction $v$. Note that by the linearity of directional derivatives, there is a unique direction $v$ such that if we move $q_1$ and $-q_1$ symmetrically in this direction, the derivative of $\lambda_2(\hat{B}^\circ)$ is zero. Note that if $v$ is not parallel to $q_i$, then we may move $q_1$ and $-q_1$ parallel to $q_i$ such that we do not change $\lambda_2(Q_4(\hat{B}))$, but decrease $\lambda_2(\hat{B}^\circ)$. Thus, $v$ is parallel to $q_i$, yielding that the line $L$ through $q_1$ parallel to $v$ supports $\hat{B}$. Similarly, if $L \cap \hat{B}$ is a nondegenerate segment, then we may move $q_1$ and $-q_1$ parallel to $L$ such that we do not change $\lambda_2(Q_4(\hat{B}))$, but we increase $\hat{B}$ with respect to inclusion, and thus, we decrease $\lambda_2(\hat{B}^\circ)$. Consequently, $\{ q_1 \} = L \cap \hat{B}$, implying that
the only maximum area parallelogram with $q_i$ as a vertex is $Q$. 

Now, let us move $q_1$ as well as $-q_1$, to points $q_1'$ and $-q_1'$ parallel to $[o,q_i]$, until we reach some other sideline of $\hat{B}$, or $[o,q_1']$ is parallel to a side of $\hat{B}$ or $[q_1',q_2]$ is parallel to $[o,q_{i+1}]$ or $[q_k,q_1']$ is parallel to $[o,q_{i-1}]$. By Lemma~\ref{lem:MR}, in this way we construct a an $o$-symmetric $k$-gon $B'$ for which $\lambda_2(Q_4(\hat{B}))=\lambda_2(Q_4(B'))$, and one of the following holds:
\begin{itemize}
\item $\mu_n^{m}(\hat{B}^\circ)) > \mu_n^{m}({B'}^\circ)$, or
\item $\mu_n^{m}(\hat{B}^\circ) = \mu_n^{m}(B'^\circ)$ and $B'$ has strictly fewer vertices, or
\item $\mu_n^{m}(\hat{B}^\circ) = \mu_n^{m}({B'}^\circ)$ and $B'$ has strictly fewer vertices $q_i$ which belong to a unique maximum area parallelogram in $B'$.
\end{itemize}
Since this process terminates after finitely many steps, we may assume that that for every vertex $q_i$ of $\hat{B}$, there is a side of $\hat{B}$ parallel to $[o,q_i]$.
By the description of Radon curves in Section~\ref{sec:Radon}, it follows that $\bd(\hat{B})$ is a Radon curve. Thus, using a suitable linear transformation on $\hat{B}$, we may assume that $\hat{B}$ is the rotated copy of $\hat{B}^\circ$ by $\frac{\pi}{2}$ around $o$. By the construction method of Radon curves described in \cite{MS}, we may assume that in this case the points $(\pm 1,0)$ and $(0, \pm 1)$ are the vertices of a largest area parallelogram in $\hat{B}$, implying also that $\lambda_2(Q_4(\hat{B}))=4$.

We show that the number $k'$ of the sides of $\hat{B}$ satisfies $k' \equiv 2 \mod 4$. Indeed, let the side parallel to $[o,q_1]$ be $[q_i,q_{i+1}]$, and observe that $q_{k'/2+1}=-q_1$. Note that then vertices $[o,q_1], \ldots, [o,q_i]$ are parallel to the sides $[q_i,q_{i+1}], \ldots, [q_{k'/2},q_{k'/2+1}]$, respectively. Thus, their numbers are equal, that is, $i=k'/2-(i-1)$. Thus, $k'/2=2i-1$, implying our statement.

In the next part we deform $\hat{B}$ so that its boundary remains a Radon curve. We carry out symmetric deformations, that is, by modifying a vertex or a sideline of $\hat{B}$ in a certain way we always assume that the opposite vertex or sideline of $\hat{B}$ is modified accordingly. Before we do it, we observe the following: the fact that $\hat{B}$ is the rotated copy of its polar by $\frac{\pi}{2}$ implies that if the point $(a,b)$ is a vertex of $\hat{B}$, then the line with equation $-bx+ay=1$ contains a side of $\hat{B}$. Based on this, we say, in general, that the point $(a,b)$ \emph{corresponds} to the line $-bx+ay=1$, and vice versa.

\begin{figure}[ht]
\begin{center}
\includegraphics[width=0.65\textwidth]{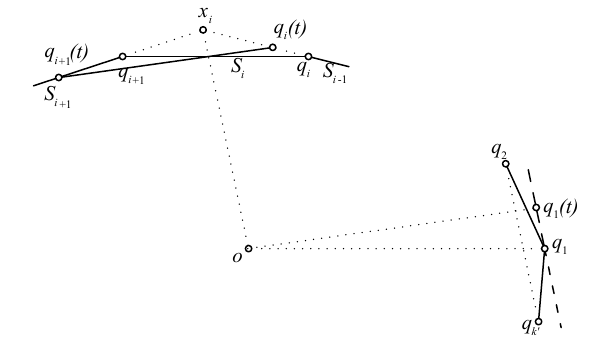}
\caption{An illustration for the modification of the Radon curve $\bd(\hat{B})$}
\label{fig:Radonmod}
\end{center}
\end{figure}

Let us denote the side parallel to $[o,q_1]$ by $S_i= [q_i,q_{i+1}]$, and let $x_i$ denote the intersection point of the sidelines through $S_{i-1}$ and $S_{i+1}$. Let us move the vertex $q_1$ continuously, on the line $L$ parallel to $[q_{k'},q_2]$ and satisfying $q_1 \in L$, at constant speed; we denote this moving vertex by $q_1(t)$, where $t$ stands for time and we set $q_1(0)=q_1$. By the properties of Radon curves, moving the point $q_1$ on $L$ corresponds to rotating the line of $S_i$ around a point $w_i$, where $w_i$ corresponds to the line $L$ in the above sense. More specifically, if we replace $\pm q_1$ by some points $\pm q_1(t)$ and simultaneously replace the sidelines of $ \pm S_i$ by some suitable lines through $\pm w_i$, then $\bd(\hat{B})$ remains a Radon curve. We also observe that, by the properties of Radon curves, these suitable lines are parallel to $[o,q_1(t)]$.

In the next step, we determine the point $w_i$. Note that since $q_1 \in L$ and $q_1$ corresponds to the line of $S_i$, $w_i$ belongs to this line. Furthermore, $L$ is parallel to $[q_{k'},q_2]$, and the line of $[q_{k'},q_2]$ corresponds to $x_i$, and thus, $w_i$ is a point of the segment $[o,x_i]$. This yields that $w_i$ is the intersection point of $[o,x_i]$ and $S_i$.

Now we define a $1$-parameter family of deformed polygons for $t \in [-\varepsilon, \varepsilon]$ for some small value of $\varepsilon$. For every such $t$ let $q_i(t)$ (resp. $q_{i+1}(t)$) denote the intersection point of $S_{i-1}$ (resp. $S_{i+1}$) with the line through $w_i$ and parallel to $[o,q_1(t)]$. Now, let $\hat{B}(t)$ be the convex $k'$-gon, obtained from $\hat{B}$ by replacing the vertices $\pm q_1, \pm q_i, \pm q_{i+1}$ by the points $\pm q_1(t), \pm q_i(t), \pm q_{i+1}(t)$, respectively. Then, for every vertex of $\hat{B}(t)$ there is a side of $\hat{B}(t)$ parallel to it, implying that $\bd(\hat{B}(t))$ is a Radon curve. For these values of $t$ we have $\lambda_2(Q_4(\hat{B}(t)))=4$.

Note that $\lambda_2 (\conv \{ q_{k'},q_1,q_2 \}) = \lambda_2 (\conv \{ q_{k'},q_1(t),q_2 \})$. Thus, if $w_i$ is not the midpoint of $[q_i,q_{i+1}]$, then for sufficiently small positive or negative values of $t$, we have $\lambda_2(\hat{B}(t)) = \lambda_2(\hat{B}^{\circ}(t)) < \lambda_2(\hat{B}) = \lambda_2(\hat{B}^{\circ})$, contradicting our assumptions. Hence, by our choice of $\hat{B}$, for every $i$ the midpoint of $[q_i,q_{i+1}]$ lies on the segment $[o,x_i]$.

In the final part we show that the above properties imply that $\hat{B}$ is an affinely regular polygon. Using a suitable linear transformation, we can assume that $o,q_{i},q_{i+1}$ are vertices of an isosceles triangle. By the properties of Radon curves, this yields that the triangle $[q_k,q_1,q_2]$ is also isosceles, and the midpoint $z_1$ of $[q_k,q_2]$ lies on $[o,q_1]$. Applying it for an arbitrary vertex we obtain that for any value of $j$, the midpoint $z_j$ of $[q_{j-1},q_{j+1}]$ belongs to $[o,q_j]$, and note that this property remains valid under any affine transformation.

\begin{figure}[ht]
\begin{center}
\includegraphics[width=0.55\textwidth]{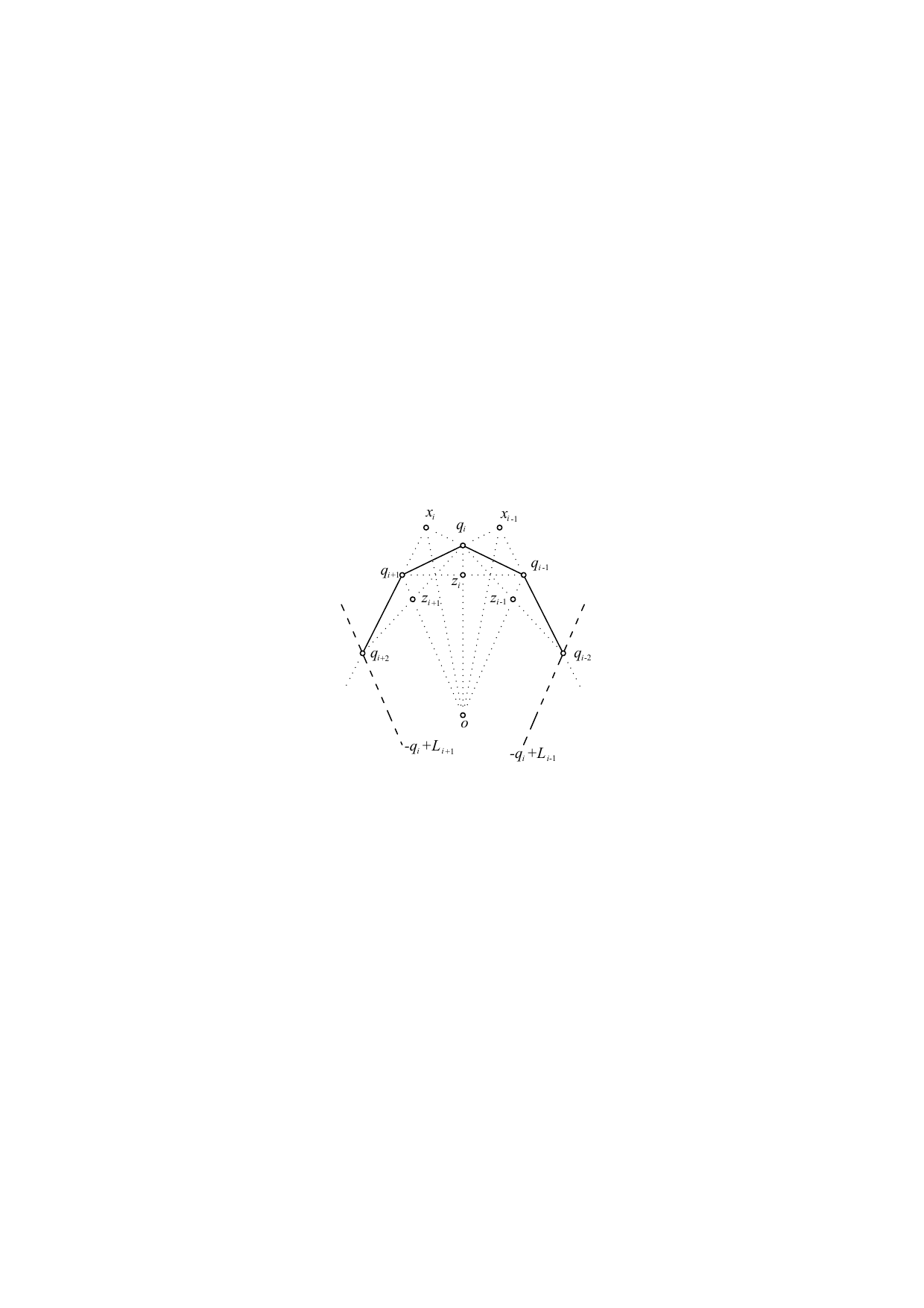}
\caption{An illustration for the proof that $\hat{B}$ is an affinely regular polygon.}
\label{fig:affreg}
\end{center}
\end{figure}

For any $i$, let $L_i$ denotes the line through $[o,q_i]$, and set $\tau_i = \frac{|q_i|}{|z_i|}$. Using a suitable linear transformation and the fact that $z_i \in L_i$, we may assume that $q_{i-1},q_{i+1}$ are symmetric to $L_i$. Observe that $x_{i-1}$ can be obtained as the intersection of the line through $S_i$, and the line through $o$ and the midpoint of $S_{i-1}$. Similarly, $x_i$ can be obtained as the intersection of the line through $S_{i-1}$, and the line through $o$ and the midpoint of $S_i$. This yields that $x_{i-1}$ and $x_i$ are symmetric to $L_i$. Furthermore, since $z_{i-1}$ is the midpoint of $[q_{i-2},q_i]$, it follows that $q_{i-2}$ lies on the line $-q_i+L_{i-1}$, and thus, it is the intersection of $-q_i+L_{i-1}$ and the line through $[x_{i-1},q_{i-1}]$. Similarly, $q_{i+2}$ is the intersection of $-q_i+L_{i+1}$ and the line through $[x_i,q_{i+1}]$. Hence, $q_{i-2}$ and $q_{i+1}$, as well as $z_{i-1}$ and $z_{i+1}$, are symmetric to $L_i$, which yields that $\tau_{i-1}=\tau_{i+1}$ (see Figure~\ref{fig:affreg}).
On the other hand, since $\hat{B}$ is $o$-symmetric and the number of its vertices is not divisible by $4$, from this it follows that there is a value $\tau$ such that $\tau=\tau_i$ for all $i$. In other words, for every value of $i$, we have
$2\tau q_i = q_{i+1}+q_{i-1}$. This yields that $2\lambda q_{i+1} = q_{i+2}+q_{i}$. Subtracting the first equality from the second one and rearranging the terms we obtain that
\[
(2\tau+1)(q_{i+1}-q_i) = q_{i+2}-q_{i-1}
\] 
for all values of $i$. Thus, by Lemma~\ref{lem:Coxeter}, $\hat{B}$ is an affinely regular polygon.

Finally, set $a_m = \mu_4^{HT}(P_{4m+2})$, where $m \geq 1$ and $P_s$ denotes a regular $s$-gon centered at $o$. An elementary computation shows that the sequence $\{ a_m \}$ is increasing, and hence, it attains its minimum at $m=1$.
\end{proof}

We note that a straightforward modification of the proof of (a) of Theorem~\ref{thm:Bushighdimmax}, replacing Lemma~\ref{lem:RS} in it with Lemma~\ref{lem:MR} readily yields the following.

\begin{theorem}
Let $d \geq 3$. Let $B= \conv(S \cup (-S))$ for some simplex $S$ centered at the origin $o$. Then $M^{HT}(d+1,d)= \mu_{d+1}^{HT}(B)$. 
\end{theorem}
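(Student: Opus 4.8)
The plan is to mirror the proof of Theorem~\ref{thm:Bushighdimmax}(a), tracking the polar volume $\lambda_d(B^\circ)$ in place of $\lambda_d(B)$ and invoking Lemma~\ref{lem:MR} in place of Lemma~\ref{lem:RS}. First I would reduce to bodies of the form $\conv(Q\cup(-Q))$. Given $B \in \Kdo$, let $Q = Q_{d+1}(B)$, which is a $d$-dimensional simplex inscribed in $B$. As $B$ is $o$-symmetric, $\conv(Q\cup(-Q)) \subseteq B$, and since $Q$ is a maximum volume inscribed simplex of $B$ it remains one of the smaller body. Because polarity reverses inclusion, $\lambda_d(B^\circ) \le \lambda_d(\conv(Q\cup(-Q))^\circ)$, and therefore $\mu_{d+1}^{HT}(B) \le \mu_{d+1}^{HT}(\conv(Q\cup(-Q)))$, so it suffices to maximize over bodies of this special form.

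Next, by the affine invariance in Remark~\ref{rem:affineinvariance}, together with the fact that any simplex centered at its centroid is a linear image of a regular one centered at $o$, I would apply a linear map carrying $Q$ to a translate $x_0 + S$ of a fixed regular simplex $S$ centered at $o$; this turns the $o$-symmetric body $\conv(Q\cup(-Q))$ into $B(x_0) := \conv((x_0+S)\cup(-x_0-S))$. Since a linear map preserves the property of being a maximum volume inscribed simplex, $x_0+S$ is the maximum volume inscribed simplex of $B(x_0)$, of volume $\lambda_d(S)$, whence $\mu_{d+1}^{HT}(\conv(Q\cup(-Q))) = \mu_{d+1}^{HT}(B(x_0)) = \frac{\lambda_d(S)}{\kappa_d}\lambda_d(B(x_0)^\circ)$.

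The heart of the matter is to show that over the family $B(x) := \conv((x+S)\cup(-x-S))$, $x \in \Red$, the polar volume $\lambda_d(B(x)^\circ)$ is maximized at $x = o$; equivalently, $g(x) := 1/\lambda_d(B(x)^\circ)$ is minimized there. Moving $x$ at constant speed along any fixed direction $v$ induces a shadow system on the $2(d+1)$ vertices of $B(x)$ (the vertex $x+s_i$ has base point $s_i$ and coefficient $+1$, while $-x-s_i$ has base point $-s_i$ and coefficient $-1$, where the $s_i$ are the vertices of $S$). Each $B(x)$ is $o$-symmetric, so its Santal\'o point is $o$ and $(B(x) - s(B(x)))^\circ = B(x)^\circ$; thus Lemma~\ref{lem:MR} shows that $t \mapsto g(x+tv)$ is convex, and hence $g$ is convex on $\Red$. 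Furthermore $g$ is invariant under the symmetry group of $S$, since every symmetry $\sigma$ of $S$ is a linear isometry satisfying $B(\sigma x) = \sigma(B(x))$, and such maps preserve volume and polarity. The minimizer set of $g$ is then a nonempty (as $g$ is coercive) compact convex set invariant under the symmetries of $S$; averaging any minimizer over this group yields a fixed minimizer, and since $o$ is the only common fixed point of the symmetries of a regular simplex, $g$ attains its minimum at $o$.

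Assembling the estimates, for arbitrary $B$ I obtain $\mu_{d+1}^{HT}(B) \le \mu_{d+1}^{HT}(B(x_0)) = \frac{\lambda_d(S)}{\kappa_d}\lambda_d(B(x_0)^\circ) \le \frac{\lambda_d(S)}{\kappa_d}\lambda_d(B(o)^\circ) \le \mu_{d+1}^{HT}(B(o))$, where the final step uses that $S \subseteq B(o)$ forces $\lambda_d(Q_{d+1}(B(o))) \ge \lambda_d(S)$. As $B(o) = \conv(S\cup(-S)) \in \Kdo$, this yields $M^{HT}(d+1,d) = \mu_{d+1}^{HT}(\conv(S\cup(-S)))$. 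I expect the main obstacle to be confirming that the one-parameter family $B(x+tv)$ is a genuine shadow system whose Santal\'o point stays at $o$, so that Lemma~\ref{lem:MR} applies directly to $\lambda_d(B(x)^\circ)$ without a Santal\'o correction; given this, the argument runs closely parallel to the proof of Theorem~\ref{thm:Bushighdimmax}(a).
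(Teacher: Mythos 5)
Your proposal is correct and follows essentially the same route as the paper, which proves this theorem precisely by repeating the proof of part (a) of Theorem~\ref{thm:Bushighdimmax} with Lemma~\ref{lem:RS} replaced by Lemma~\ref{lem:MR}: reduce to $\conv((x+S)\cup(-x-S))$ for a regular simplex $S$, observe that translating $x$ induces a shadow system, and use convexity of $x \mapsto 1/\lambda_d(B(x)^\circ)$ (the Santal\'o point being $o$ by symmetry) together with invariance under the symmetry group of $S$ to conclude that the polar volume is maximal at $x=o$. Your write-up supplies the details the paper leaves implicit, and they check out.
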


\section{Extremal values of the Gromov's masses of inscribed polytopes}\label{sec:Grm}

In this section, we intend to find the extremal values of
\[
\mu_n^{m}(B) = \frac{\lambda_d(Q_n(B))}{\lambda_d(I(B))} \cdot \frac{2^d}{d!}.
\]
over all $B \in \Kdo$.

First, we collect our planar results.

\begin{theorem}\label{thm:Grmplanar}
Let $n \geq 3$. Then the following holds.
\begin{enumerate}
\item[(1)] We have $m^m(3,2)=1$, and $m^m(3,2) = \mu_3^{m}(B)$ for $B \in \Ktwoo$ if $B$ is a parallelogram.
\item[(2)] If $n \geq 4$, then $m^m(n,2) = 2$, and $m^m(n,2) = \mu_n^{m}(B)$ for $B \in \Ktwoo$ if $B$ is a parallelogram.
\item[(3)] We have $M^m(4,2) = 2$, and $M^m(4,2) = \mu_n^{m}(B)$ for any $B \in \Ktwoo$.
\item[(4)] If $n \geq 6$ is even, and $n'$ denotes the largest positive integer with $n' \leq n$ and $n' \equiv 2 \mod 4$, then  $M^m(n,2) = \frac{n' \sin \frac{2 \pi}{n'} }{2 \sin \frac{8\pi}{n'-2}}$, and $M^m(n,2) = \mu_n^{m}(B)$ if $B$ is an affinely regular $n'$-gon.
\end{enumerate}
\end{theorem}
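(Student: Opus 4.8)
The plan is to read everything off the identity $\mu_n^m(B)=\frac{\lambda_2(Q_n(B))}{\lambda_2(I(B))}\cdot 2$ from Remark~\ref{rem:volformulas}, using that in the plane a cross-polytope is an $o$-symmetric parallelogram, so $I(B)$ is a largest-area $o$-symmetric parallelogram inscribed in $B$. With this, items (1)–(3) are immediate. For (1), $I(B)$ is a parallelogram inscribed in $B$, and the largest triangle inscribed in any parallelogram has exactly half its area, so $\lambda_2(Q_3(B))\geq\tfrac12\lambda_2(I(B))$ and $\mu_3^m(B)\geq 1$; for a parallelogram $B$ one has $\lambda_2(I(B))=\lambda_2(B)$ and $\lambda_2(Q_3(B))=\tfrac12\lambda_2(B)$, giving equality. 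For (2), when $n\geq 4$ the parallelogram $I(B)$ is itself an inscribed $n$-gon, whence $\lambda_2(Q_n(B))\geq\lambda_2(I(B))$ and $\mu_n^m(B)\geq 2$, with equality for parallelograms. For item (3) with $n=4$, Lemma~\ref{lem:Dowker} (with $m=2$) lets us take a largest-area inscribed $4$-gon to be $o$-symmetric, i.e. a parallelogram, so $\lambda_2(Q_4(B))=\lambda_2(I(B))$ and $\mu_4^m(B)=2$ for \emph{every} $B\in\Ktwoo$.

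The substance is the final item. First I would shrink the domain: for even $n$, Lemma~\ref{lem:Dowker} lets us take $Q_n(B)$ to be $o$-symmetric, and replacing $B$ by $Q_n(B)$ leaves $\lambda_2(Q_n(\cdot))$ unchanged while not increasing $\lambda_2(I(\cdot))$, since every parallelogram inscribed in $Q_n(B)$ is inscribed in $B$; hence $\mu_n^m$ does not decrease. So it suffices to maximize $\mu_n^m(B)=2\lambda_2(B)/\lambda_2(I(B))$ over $o$-symmetric $n$-gons $B$, equivalently to minimize $\lambda_2(I(B))/\lambda_2(B)$. A maximizer $\hat B$ exists by compactness (Remark~\ref{rem:affineinvariance}). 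If some vertex $v$ of $\hat B$ lay on no largest inscribed parallelogram, then moving $v$ and $-v$ slightly outward would strictly increase $\lambda_2(\hat B)$ while leaving $\lambda_2(I(\hat B))$ fixed for small displacements, contradicting maximality; thus every vertex of $\hat B$ is a vertex of a largest inscribed parallelogram.

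Next I would upgrade this to the statement that $\bd(\hat B)$ is a Radon curve, following the deformation scheme of the proof of Theorem~\ref{thm:HTplane}(2): translating a single vertex along the chord through its neighbours keeps $\lambda_2(\hat B)$ fixed, and a reflection-symmetric shadow system (Lemma~\ref{lem:RS}), together with the evenness of $t\mapsto\lambda_2(K(t))$ under reflection, lets one drive the process to a polygon in which every boundary point is a vertex of a largest parallelogram. By the characterization of Radon curves recalled before the proof of Theorem~\ref{thm:HTplane}, $\bd(\hat B)$ is then a Radon curve, and the same counting argument used there shows its number of sides $n'$ satisfies $n'\equiv 2\pmod 4$ (and $n'\leq n$). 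The affine-symmetrization step via Coxeter's lemma (Lemma~\ref{lem:Coxeter}) then forces $\hat B$ to be affinely regular, exactly as in that proof. Evaluating on a regular $n'$-gon inscribed in $\BB^2$ gives $\lambda_2(\hat B)=\tfrac{n'}{2}\sin\frac{2\pi}{n'}$, and taking the two vertices whose angular separation is nearest $\tfrac{\pi}{2}$ (which exist precisely because $n'\equiv 2\pmod 4$) gives $\lambda_2(I(\hat B))=2\cos\frac{\pi}{n'}$, so $\mu_n^m(\hat B)=\frac{n'\sin(2\pi/n')}{2\cos(\pi/n')}=n'\sin\frac{\pi}{n'}$, the claimed extremal value. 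Since $x\mapsto x\sin\frac{\pi}{x}$ is increasing, the maximum over admissible $n'$ is attained at the largest $n'\leq n$ with $n'\equiv 2\pmod 4$.

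The main obstacle is the middle step, namely proving that the maximizer is affinely regular. Unlike the Holmes–Thompson case, the functional $\lambda_2(I(\cdot))$ carries no ready convexity property under shadow systems in the spirit of Lemma~\ref{lem:MR}, so each deformation must be justified by hand, keeping careful track of which inscribed parallelograms are of maximum area throughout the motion and using only Lemma~\ref{lem:RS} and the reflection symmetry of the configurations. A secondary subtlety, worth isolating, is the bookkeeping that yields $n'$ rather than $n$: one must establish both the residue condition $n'\equiv 2\pmod 4$ imposed by the Radon structure and the monotonicity in $n'$ that selects the largest admissible value, which together explain why, e.g., a regular hexagon beats a regular octagon for $n=8$.
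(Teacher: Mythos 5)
Your proposal is correct and follows essentially the same route as the paper: items (1)--(3) via $Q_3(I(B))$, the inscribed parallelogram $I(B)$ itself, and Lemma~\ref{lem:Dowker}, and item (4) by reducing to an $o$-symmetric $n$-gon, forcing every vertex onto a maximum-area inscribed parallelogram, and then importing the Radon-curve/Coxeter machinery from the proof of Theorem~\ref{thm:HTplane}(2); the paper likewise only sketches that middle step, and your observation that Lemma~\ref{lem:MR} has no direct analogue for $\lambda_2(I(\cdot))$ (so the deformations keeping $I(\hat B)$ under control must be argued separately) is a fair flag of a gap that the paper's own sketch also leaves open. One point worth recording: your closed form $\mu_n^m(P_{n'})=n'\sin\frac{\pi}{n'}=\frac{n'\sin\frac{2\pi}{n'}}{2\cos\frac{\pi}{n'}}$ is the correct evaluation on the affinely regular $n'$-gon (with $\lambda_2(I(P_{n'}))=2\cos\frac{\pi}{n'}$ precisely because $n'\equiv 2 \mod 4$), and it does \emph{not} match the printed expression $\frac{n'\sin\frac{2\pi}{n'}}{2\sin\frac{8\pi}{n'-2}}$, whose denominator vanishes for $n'=6$ and $n'=10$; the printed formula is evidently a typo for $2\sin\frac{(n'-2)\pi}{2n'}=2\cos\frac{\pi}{n'}$, so your value should be taken as the correct one, and your monotonicity remark about $x\mapsto x\sin\frac{\pi}{x}$ matches the paper's claim that the sequence $g_k$ is increasing.
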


\begin{proof}
To prove (1), note that for every $B \in \Ktwoo$,
\[
\mu_3^{m}(B) = \frac{\lambda_d(Q_3(B))}{\lambda_d(I(B))} \cdot 2 \geq \frac{\lambda_d(Q_3(I(B)))}{\lambda_d(I(B))}\cdot 2=\frac{1}{2}\cdot 2=1,
\]
with equality e.g. if $I(B)=B$.

To prove (2), observe that if $n \geq 4$, then
\[
\mu_n^{m}(B) \geq \frac{\lambda_2(I(B))}{\lambda_2(I(B))} \cdot 2 =2. 
\]
The statement in (3) follows from Lemma~\ref{lem:Dowker}. Now we prove (4).

Since $n$ is even, by Lemma~\ref{lem:Dowker}, we may assume that $Q_n(B)$ is an $o$-symmetric convex $n$-gon. Thus,
\[
\mu_n^{m}(B) = \frac{2 \lambda_2(Q_n(B))}{\lambda_2(I(B))} \leq \frac{2 \lambda_2(Q_n(B))}{\lambda_2(I(Q_n(B)))},
\]
with equality if $B=Q_n(B)$. In other words, we may assume that $B$ is an $o$-symmetric convex $n$-gon. From now on, we follow the proof of (2) of Theorem~\ref{thm:HTplane}. We only sketch the proof.

Let $\hat{B} \in \Ktwoo$ be a maximizer of the functional $\mu_n^{m}(\cdot)$, and note that $\hat{B}$ is a polygon with at most $n$ vertices.
If $\hat{B}$ has a vertex $q$ which is not a vertex of a maximum area parallelogram in $\hat{B}$, then moving $q$ and $-q$ we can increase $\lambda_2(Q_n(\hat{B})) = \lambda_2(\hat{B})$ while keeping $\lambda_2(I(\hat{B}))$ fixed, a contradiction. Thus, every vertex of $\hat{B}$ is a vertex of a largest area parallelogram in $\hat{B}$.

If $\hat{B}$ has a vertex which belongs to a unique maximal area parallelogram in $\hat{B}$,
then we can construct an $o$-symmetric $n$-gon $B'$ such that
\begin{itemize}
\item $\mu_n^{m}(\hat{B}) < \mu_n^{m}(B')$, or
\item $\mu_n^{m}(\hat{B}) = \mu_n^{m}(B')$ and $B'$ has strictly fewer vertices, or
\item $\mu_n^{m}(\hat{B}) = \mu_n^{m}(B')$ and $B'$ has strictly fewer vertices which belong to a unique maximum area parallelogram.
\end{itemize}
Thus, we may assume that every vertex $q$ of $\hat{B}$ belongs to more than one maximum area parallelogram, implying that $\bd (\hat{B})$ is a Radon curve, and that the number $n'$ of the sides of $\hat{B}$ satisfies $n' \equiv 2 \mod 4$.

Next, as in the proof of Theorem~\ref{thm:HTplane} and using the notation there, we show that if the vertices of $B_n$ are $q_1, q_2, \ldots, q_{n'}$ in counterclockwise order, $S_i = [q_i,q_{i+1}]$, and $x_i$ is the intersection point of the sidelines of $\hat{B}$ through $S_{i-1}$ and $S_{i+1}$, then the segment $[o,x_i]$ contains the midpoint of $S_i$. This, as we have seen in the proof of Theorem~\ref{thm:HTplane}, implies that $\hat{B}$ is affinely regular.

Finally, an elementary computation shows that if $g_k = \mu_{4k+2}^{m}(P_{4k+2})$, where $P_{4k+2}$ is an $o$-symmetric regular $(4k+2)$-gon, then the sequence $\{ g_k \}$ is increasing, implying (3) of Theorem~\ref{thm:Grmplanar}.
\end{proof}

\begin{remark}
We observe that a slight modification of the proof of (3) of Theorem~\ref{thm:Grmplanar} yields the following: If $n \geq 3$ is not necessarily even, then there is some $\hat{B} \in \Ktwoo$ such that $\bd(\hat{B})$ is a Radon curve, and $M^m(n,2)=\mu_n^{m}(\hat{B})$.
\end{remark}

Our higher dimensional results are collected in the next theorem. For this, we need the following straightforward consequence of Lemma~\ref{lem:RS}, the proof of which we leave to the reader.

\begin{lemma}\label{lem:vertices}
For any convex polytope $P$ in $\Red$ and $n \geq d+1$, there is a maximum volume polytope $Q_n$ with at most $n$ vertices inscribed in $P$ such that every vertex of $Q_n$ is a vertex of $P$.
\end{lemma}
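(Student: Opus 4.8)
The plan is to start from an arbitrary maximum volume inscribed polytope and repeatedly push its vertices onto lower-dimensional faces of $P$ by means of a one-vertex shadow system, the monotonicity being supplied by Lemma~\ref{lem:RS}. First I would record that a maximizer exists: the family of polytopes with at most $n$ vertices contained in $P$ is the image of the compact set $P^n$ under $(p_1,\dots,p_n)\mapsto\conv\{p_1,\dots,p_n\}$, and $\lambda_d(\conv\{p_1,\dots,p_n\})$ is a continuous function of $(p_1,\dots,p_n)$, so the supremum of the volume is attained. (Since a maximum volume polytope has all its vertices on $\bd P$, this family gives the same extremal value as the inscribed ones.)

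For each vertex $q$ of a candidate polytope let $F_q$ denote the smallest (carrier) face of $P$ containing $q$, so that $q\in\relint F_q$ and $q$ is a vertex of $P$ exactly when $\dim F_q=0$. Among all maximum volume polytopes with at most $n$ vertices contained in $P$, I would choose one, call it $Q$, minimizing the potential $\Phi(Q)=\sum_{q}\dim F_q$, where the sum runs over the vertices of $Q$. The assertion to be proved is that every vertex of $Q$ is a vertex of $P$, i.e. that $\Phi(Q)=0$. Let $V$ denote the vertex set of $Q$.

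The key step is the single-vertex push. Suppose some vertex $q\in V$ has $\dim F_q\ge 1$. Pick a nonzero vector $v$ parallel to the affine hull of $F_q$; since $q\in\relint F_q$ and $F_q$ is a polytope, the set $\{t:q+tv\in F_q\}$ is a closed interval $[a,b]$ with $a<0<b$, and $q+bv$ lies in a proper face of $F_q$. Setting $\lambda_q=1$ and $\lambda_x=0$ for all other $x\in V$, the family $C(t)=\conv\bigl((V\setminus\{q\})\cup\{q+tv\}\bigr)$ is a shadow system in the sense of Definition~\ref{defn:shadowsystem}, and $C(t)\subseteq P$ for all $t\in[a,b]$, because $q+tv\in F_q\subseteq P$ while the remaining vertices stay fixed in $P$. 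By Lemma~\ref{lem:RS} the function $g(t)=\lambda_d(C(t))$ is convex, and by maximality $g(t)\le g(0)$ on $[a,b]$. Writing $0=\frac{b}{b-a}a+\frac{-a}{b-a}b$ as a convex combination with positive coefficients, convexity gives $g(0)\le\frac{b}{b-a}g(a)+\frac{-a}{b-a}g(b)\le g(0)$, which forces $g(b)=g(0)$. Hence $Q'=C(b)$ is again a maximum volume polytope with at most $n$ vertices contained in $P$.

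Finally, I would check that $\Phi$ strictly dropped, contradicting the choice of $Q$. The vertex set of $Q'$ is contained in $(V\setminus\{q\})\cup\{q+bv\}$; every surviving vertex other than $q+bv$ retains its carrier face, while $q+bv$, if it is a vertex of $Q'$, has carrier face a proper face of $F_q$ and hence strictly smaller dimension. As carrier dimensions are nonnegative, discarding vertices only lowers the potential, so $\Phi(Q')<\Phi(Q)$, the desired contradiction; therefore $\Phi(Q)=0$. The point that requires care is precisely this bookkeeping of the vertex set after a push, combined with the elementary but essential observation that a convex function has no strict interior maximum — the device that upgrades ``the volume does not increase'' to ``the pushed polytope is still optimal.'' The rest is the routine verification that the one-vertex motion is an admissible shadow system staying inside $P$, which is where Lemma~\ref{lem:RS} does all the work.
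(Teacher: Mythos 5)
Your argument is correct, and it is exactly the route the paper intends: the authors state that Lemma~\ref{lem:vertices} is a straightforward consequence of Lemma~\ref{lem:RS} and leave the proof to the reader, and your one-vertex shadow system together with the ``no strict interior maximum for a convex function'' observation is precisely that argument. The carrier-face potential $\Phi$ and the check that $g(b)=g(0)$ are handled cleanly, so there is nothing to add.
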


\begin{theorem}\label{thm:Grmhighdim}
Let $d \geq 3$. We have the following.
\begin{enumerate}
\item[(1)] For any $n \geq 2d$, $m^m(n,d) = \frac{2^d}{d!}$. Furthermore, $m^m(n,d) = \mu_n^{m}(B)$ for $B \in \Kdo$ if $B$ is a cross-polytope.
\item[(2)] For $d+1\leq n\leq 2d-1$, $m^m(n,d) = \frac{2^d}{d!\cdot 2^{2d-n}}$, and
$m^m(n,d) = \mu_n^{m}(B)$ for $B \in \Kdo$ if $B$ is a cross-polytope.
\end{enumerate}
\end{theorem}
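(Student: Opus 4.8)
The plan is to reduce everything to a comparison between $\lambda_d(Q_n(B))$ and $\lambda_d(I(B))$, exploiting that both $Q_n$ and $I$ transform covariantly under a nondegenerate linear map $L$, i.e. $Q_n(L(B))=L(Q_n(B))$ and $I(L(B))=L(I(B))$ up to choice of maximizer, so that the ratio $\lambda_d(Q_n(B))/\lambda_d(I(B))$ is affine invariant (cf. Remark~\ref{rem:affineinvariance}). Consequently I may normalise so that $I(B)=\conv\{\pm e_1,\dots,\pm e_d\}$ is the standard cross-polytope, which is inscribed in $B$ and has volume $\frac{2^d}{d!}$. By Remark~\ref{rem:volformulas} the whole problem is then to bound $\lambda_d(Q_n(B))$ from below by a fixed fraction of $\lambda_d(I(B))$ and to check that a cross-polytope makes this sharp.

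For part (1), when $n\geq 2d$ the cross-polytope $I(B)$ itself has $2d\leq n$ vertices and lies in $B$, hence it is an admissible competitor for $Q_n(B)$; this gives $\lambda_d(Q_n(B))\geq\lambda_d(I(B))$ and therefore $\mu_n^m(B)\geq \frac{2^d}{d!}$ for every $B\in\Kdo$. If $B$ is a cross-polytope then $I(B)=B=Q_n(B)$, so equality holds and $m^m(n,d)=\frac{2^d}{d!}$.

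For part (2), where $d+1\leq n\leq 2d-1$, the key competitor is the partial cross-polytope
\[
P=\conv\bigl(\{\pm e_i : i\in S\}\cup\{e_j : j\in T\}\bigr),
\]
where $S,T$ partition $\{1,\dots,d\}$ with $|S|=n-d\geq 1$ and $|T|=2d-n\geq 1$. This polytope has exactly $n$ vertices and satisfies $P\subseteq I(B)\subseteq B$. I would compute $\lambda_d(P)$ by induction on $|T|$: selecting one single direction $e_{j_0}$ exhibits $P$ as a pyramid of height $1$ over the analogous partial cross-polytope in the hyperplane $\{x_{j_0}=0\}\cong\Re^{d-1}$, giving $\lambda_d(P)=\frac1d\cdot\frac{2^{n-d}}{(d-1)!}=\frac{2^{n-d}}{d!}$. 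Since $P$ is admissible, $\lambda_d(Q_n(B))\geq \frac{2^{n-d}}{d!}=2^{-(2d-n)}\lambda_d(I(B))$, whence $\mu_n^m(B)\geq \frac{2^d}{d!\cdot 2^{2d-n}}$ for all $B$.

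It then remains to verify equality for the cross-polytope $B=\conv\{\pm e_i\}$, where $I(B)=B$. By Lemma~\ref{lem:vertices} a maximum-volume inscribed $n$-vertex polytope can be taken with vertices among the $\pm e_i$; a full-dimensional selection must retain at least one of $\{e_i,-e_i\}$ in every coordinate, so it doubles exactly $n-d$ directions, and the pyramid computation shows every such selection has volume $\frac{2^{n-d}}{d!}$. Hence $\lambda_d(Q_n(B))=\frac{2^{n-d}}{d!}$ and $\mu_n^m(B)=\frac{2^d}{d!\cdot 2^{2d-n}}$, yielding the claimed value. The only non-formal points are the volume identity for $P$ and the combinatorial observation that all full-dimensional vertex selections of the cross-polytope give the same volume; both are settled by the single pyramid induction, so I anticipate no serious obstacle beyond bookkeeping.
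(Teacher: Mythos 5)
Your proposal is correct and follows essentially the same route as the paper: reduce the lower bound to the cross-polytope $I(B)$ inscribed in $B$, invoke Lemma~\ref{lem:vertices} to restrict to vertex selections of the cross-polytope, and observe that full-dimensionality forces exactly $n-d$ antipodal pairs and $2d-n$ singletons. The only difference is that you spell out, via the pyramid induction, the volume $\frac{2^{n-d}}{d!}$ of such a partial cross-polytope, which the paper leaves as ``an elementary computation.''
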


\begin{proof}
To prove (1), observe that $\lambda_d(Q_n(B)) \geq \lambda_d(I(B))$, with equality e.g. if $B=I(B)$. Thus,
\[
\mu_n^{m}(K) = \frac{\lambda_d(Q_n(B))}{\lambda_d(I(B))} \cdot \frac{2^d}{d!} \geq \frac{2^d}{d!},
\]
and the assertion in (1) follows.

To prove (2), note that
\[
\mu_n^{m}(B) \geq \frac{\lambda_d(Q_n(I(B)))}{\lambda_d(I(B))}\cdot \frac{2^d}{d!},
\]
and hence, we may assume that $B$ is a cross-polytope, that is $B= \conv \{ \pm q_i : i=1,2,\ldots, d\}$ for some linearly independent vectors $q_i$. By Lemma~\ref{lem:vertices}, we may assume that every vertex of $Q_n(B)$ is a vertex of $B$. Since $Q_n(B)$ is $d$-dimensional, as it has positive volume, it contains at least one element of each pair $\{q_i, -q_i \}$. Thus, the vertex set of $Q_n(B)$ consists of $(n-d)$ pairs of antipodal points, and $2d-n$ singletons. From this, an elementary computation yields the statement. 
\end{proof}

\section{Extremal values of the Gromov's mass$^*$s of inscribed polytopes}
\label{sec:Grmstar}

In this section, we intend to find the extremal values of
\[
\mu_n^{m*}(B) = \frac{\lambda_d(Q_n(B))}{\lambda_d(C(B))} \cdot 2^d.
\]
over all $B \in \Kdo$.

Our main result in this section is Theorem~\ref{thm:Grmstplanar}.

\begin{theorem}\label{thm:Grmstplanar}
We have the following.
\begin{enumerate}
\item[(1)] $M^{m*} (3,2) = 2$, and $\mu_3^{m*}(B)= M^{m*} (3,2)$ for $B \in \Ktwoo$ if $B$ is a parallelogram.
\item[(2)] For any $n \geq 4$, $M^{m*} = 4$, and $\mu_n^{m*}(B)= M^{m*} (n,2)$ for $B \in \Ktwoo$ if $B$ is a parallelogram.
\item[(3)] $m^{m*}(4,2)=2$, and $\mu_4^{m*}(B)= m^{m*} (4,2)$ for $B \in \Ktwoo$ if $\bd (B)$ is a Radon curve.
\item[(4)] If $n \geq 3$, then there is some $B \in \Ktwoo$ such that $\mu_n^{m*}(B)= m^{m*} (n,2)$ and $\bd (B)$ is a Radon curve.
\end{enumerate}
\end{theorem}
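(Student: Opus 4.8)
I would treat the four parts in order of increasing difficulty. For (1) and (2), I write $\mu_n^{m*}(B)=4\,\lambda_2(Q_n(B))/\lambda_2(C(B))$ and use $Q_n(B)\subseteq B\subseteq C(B)$, so that $Q_n(B)$ is an $n$-gon inscribed in the parallelogram $C(B)$. A triangle inscribed in a parallelogram has area at most half the parallelogram, giving $\mu_3^{m*}(B)\le2$, while an inscribed $n$-gon with $n\ge4$ has area at most that of the parallelogram, giving $\mu_n^{m*}(B)\le4$; both bounds are attained when $B$ is a parallelogram, since then $C(B)=B$. For the lower bound in (3), let $p=\conv\{\pm v_1,\pm v_2\}$ be a maximal-area inscribed parallelogram, so that $\lambda_2(Q_4(B))=2|v_1\times v_2|$. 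Its maximality forces the conjugate-diameter property: the supporting line of $B$ parallel to $[o,v_1]$ passes through $v_2$, and the one parallel to $[o,v_2]$ through $v_1$. Hence the parallelogram $P$ bounded by these four supporting lines circumscribes $B$, has vertices $\pm(v_1+v_2),\pm(v_1-v_2)$, and satisfies $\lambda_2(P)=4|v_1\times v_2|=2\lambda_2(Q_4(B))$. Therefore $\lambda_2(C(B))\le\lambda_2(P)=2\lambda_2(Q_4(B))$, which yields $\mu_4^{m*}(B)\ge2$. For the equality case I would record that the minimal circumscribed parallelogram touches $B$ at the midpoints of its sides (the first-order condition for minimality), so the parallelogram through these four contact points is inscribed in $B$ with area $\tfrac12\lambda_2(C(B))$, and no inscribed parallelogram having a contact point as a vertex is larger. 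If $\bd B$ is a Radon curve then, by the characterization recalled before Theorem~\ref{thm:HTplane}, each contact point is a vertex of a globally maximal inscribed parallelogram; combining the two facts gives $\lambda_2(Q_4(B))=\tfrac12\lambda_2(C(B))$, hence $\mu_4^{m*}(B)=2$ and the infimum is attained.

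For (4) I would run the symmetrization scheme from the proofs of Theorem~\ref{thm:HTplane}(2) and Theorem~\ref{thm:Grmplanar}(4). By Remark~\ref{rem:affineinvariance} a minimizer $\hat B$ exists; by Lemma~\ref{lem:Dowker} its maximal inscribed parallelograms may be taken $o$-symmetric, and by Lemma~\ref{lem:vertices} the vertices of $Q_n(\hat B)$ may be taken on $\bd\hat B$. The aim is to show that at the minimizer every boundary point is a vertex of a maximal-area inscribed parallelogram, for then the characterization of \cite{MS} forces $\bd\hat B$ to be a Radon curve. For a boundary point violating this I would move it, together with its antipode, along the unique direction keeping one of the two competing quantities stationary to first order, and show that the functional then strictly decreases, contradicting minimality; a boundary point lying on a \emph{unique} maximal inscribed parallelogram would be eliminated by the complexity-reducing deformation of Theorem~\ref{thm:HTplane}(2), with the requisite convexity supplied by Lemma~\ref{lem:RS} for the inscribed $n$-gon and by Lemma~\ref{lem:MR} for the polar factor.

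The main obstacle is the denominator $\lambda_2(C(B))$: being itself a minimum over a one-parameter family of circumscribed parallelograms, it does not vary simply along shadow systems, in contrast with $\lambda_2(Q_n(B))$ (convex by Lemma~\ref{lem:RS}) and $\lambda_2(B^\circ)$ (reciprocally convex by Lemma~\ref{lem:MR}). I would control it in two complementary ways. First, polarity carries circumscribed parallelograms of $B$ to inscribed parallelograms of $B^\circ$, and every $o$-symmetric parallelogram $P$ satisfies $\lambda_2(P)\lambda_2(P^\circ)=8$; hence $\lambda_2(C(B))\,\lambda_2(Q_4(B^\circ))=8$, and the functional rewrites as $\mu_n^{m*}(B)=\tfrac12\,\lambda_2(Q_n(B))\,\lambda_2(Q_4(B^\circ))$, whose second factor is a maximal-inscribed-parallelogram area controlled by Lemma~\ref{lem:RS} after passing to $B^\circ$. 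Second, I would restrict the first-order moves to boundary points lying strictly inside the sides of $C(\hat B)$, where $C(\hat B)$ is locally constant and only $\lambda_2(Q_n(\hat B))$ varies. The genuine difficulty, exactly as in Theorem~\ref{thm:HTplane}(2), is that the two factors are governed by $B$ and by its polar respectively, so the stationarity directions must be reconciled simultaneously; making this precise is where most of the work lies.
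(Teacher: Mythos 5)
Parts (1)--(3) of your proposal are correct and essentially follow the paper's route; in (3) you actually supply more detail than the paper does, deriving $\lambda_2(C(B))\le 2\lambda_2(Q_4(B))$ from the conjugate-diameter property of a maximal inscribed parallelogram, where the paper simply states the inequality and cites \cite{MS} for the Radon equality case. (Your equality-case claim that ``no inscribed parallelogram having a contact point as a vertex is larger'' is asserted rather than proved, but this is on par with the paper's reliance on \cite{MS}.)

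The genuine gap is in (4). You propose to transplant the shadow-system symmetrization of Theorems~\ref{thm:HTplane}(2) and~\ref{thm:Grmplanar}, and you correctly identify the obstruction --- the denominator $\lambda_2(C(B))$ is itself an inner minimum over circumscribed parallelograms and is controlled neither by Lemma~\ref{lem:RS} nor by Lemma~\ref{lem:MR} --- but you do not overcome it: ``making this precise is where most of the work lies'' is exactly the missing proof. Your target is also misaligned: you aim to force the Radon condition (every boundary point is a vertex of a maximal \emph{inscribed} parallelogram), whereas the condition naturally extremized by $\lambda_2(Q_n(B))/\lambda_2(C(B))$ is the \emph{equiframed} condition, that every boundary point lies on a side of a minimal \emph{circumscribed} parallelogram. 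The paper's argument for (4) avoids shadow systems entirely: among all minimizers of $\mu_n^{m*}$ choose one, $\hat{B}$, of minimal area. If some supporting line $L$ of $\hat{B}$ is not a sideline of a minimal circumscribed parallelogram, one first checks that neither endpoint of $L\cap \hat{B}$ can be the midpoint of a side of such a parallelogram (otherwise swapping that sideline for $L$ would produce one through $L$); then pushing $L$ and $-L$ slightly toward $o$ yields $B'\subsetneq \hat{B}$ with $\lambda_2(C(B'))=\lambda_2(C(\hat{B}))$ and $\lambda_2(Q_n(B'))\le\lambda_2(Q_n(\hat{B}))$, contradicting the minimality of the area. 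Hence $\bd(\hat{B})$ is equiframed, and the structure theorem of Martini and Swanepoel (Section~5 of \cite{MS}) produces a Radon body $B'\subseteq \hat{B}$ with $\lambda_2(C(B'))=\lambda_2(C(\hat{B}))$, which is therefore also a minimizer. Your polarity identity $\lambda_2(C(B))\,\lambda_2(Q_4(B^\circ))=8$ is correct and might support an alternative attack, but as written part (4) is not established.
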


\begin{proof}
To prove (1), note that
\[
\mu_3^{m*}(B) = \frac{4\lambda_2(Q_3(B))}{\lambda_2(C(B))} \leq \frac{4\lambda_2(Q_3(C(B)))}{\lambda_2(C(B))} = 2.
\]
Similarly, to prove (2) observe that
\[
\mu_n^{m*}(B) \leq \frac{4\lambda_2(B)}{\lambda_2(B)} \leq 4,
\]
and we have equality if $B$ is a parallelogram.

For (3), we note that by Lemma~\ref{lem:Dowker}:
\[
\mu_4^{m*}(B) = \frac{4\lambda_2(I(B))}{\lambda_2(C(B))} \geq 2,
\]
and we have equality if $\bd(B)$ is a Radon curve (see \cite{MS}).

Now we prove (4). Assume that $B \in \Ktwoo$ satisfies $\mu_n^{m*}(B) = m^{m*}(n,2)$, and $B$ has minimum area among these convex disks.
We show that $\bd(B)$ is an equiframed curve; i.e. every point of $\bd(B)$ lies on a side of a minimum area parallelogram circumscribed about $B$. 
Note that for any such parallelogram $P$, the midpoints of the sides of $P$ belong to $B$. 

Suppose that there a supporting line $L$ of $B$ that is not a sideline of such a parallelogram, and note that by symmetry, $-L$ has the same property. Let $[p,q] = L \cap B$, where we possibly have $p=q$. If one of the endpoints, say $p$ is the midpoint of a side of a minimum area circumscribed parallelogram $P$, then replacing the sidelines of $P$ through $p$ and $-p$ by $L$ and $-L$ we again obtain a minimum area circumscribed parallelogram, which contradicts our assumption that $L$ is not a sideline of such a parallelogram. Thus, neither $p$ nor $q$ is the midpoint of a side of a minimum area circumscribed parallelogram. But then, slightly moving $L$ and $L'$ towards $o$ we obtain some $o$-symmetric convex disk $B' \subsetneq B$
with $\lambda_2(C(B'))=\lambda_2(C(B))$, a contradiction. This shows that $\bd(B)$ is an equiframed curve.
Thus, based an the description of equiframed curves in Section~\ref{sec:Radon}, we observe that there is some $B' \subseteq B$ in $\Ktwoo$ such that $\bd(B')$ is a Radon curve, and $\lambda_2(C(B'))=\lambda_2(C(B))$, implying the statement.
\end{proof}

\begin{remark}
Note that for any $n \geq d+1$ and $B \in \Kdo$,
\[
\mu_n^{m*}(B) = \frac{\lambda_d(Q_n(B))}{\lambda_d(C(B))} \cdot 2^d \leq 2^d.
\]
On the other hand, if $n \geq 2^d$ and $B$ a parallelotope, then $Q_n(B)=B=P(B)$, implying that in this case $M^{m*}(n,d) = 2^d$.
\end{remark}

\section{Additional remarks and open questions}\label{sec:rem}

The variant of Problem~\ref{prob:main} for Busemann volume leads to the following question:

\begin{problem}
Let $d \geq 2$ and $d+3 \leq n \leq 2d-1$. Find the minimum volume of an $o$-symmetric convex polytope in $\Re^d$ that contains a unit volume convex polytope with $n$ vertices.
\end{problem}

For our next problem, note that since for every $B \in \Ktwoo$, the sequence  
$\{ \mu_n^{HT}(B) \}$ is increasing, the sequence $\{ m^{HT}(n,2) \}$ is also increasing. On the other hand, the example of a parallelogram $P$ shows that for any $n \geq 4$,
\[
m^{HT}(n,2) \leq \frac{\lambda_2(P) \lambda_2(P^\circ)}{\pi} = \frac{8}{\pi}.
\]
Observe that for any $B \in \Ktwoo$, if $n$ is sufficiently large, then $\lambda_2(Q_n(B)) \approx \lambda_2((B))$, implying that $ \mu_n^{HT}(B) \approx \frac{\lambda_2(B) \lambda_2(B^\circ)}{\pi}$. On the other hand, a classical result of Mahler shows that $\lambda_2(B) \lambda_2(B^\circ) \geq 8$ for any $B \in \Ktwoo$, with equality for parallelograms. This suggests that if $n$ is sufficiently large, then $m^{HT}(n,2) \approx \frac{8}{\pi}$.

We conjecture the following, which implies that $m^{HT}(n,2) = \frac{8}{\pi}$ for all $n \geq 7$.

\begin{conjecture}
For any $B \in \Ktwoo$, we have
\[
\lambda_2(Q_6(B)) \lambda_2(B^{\circ}) \geq 8,
\]
with equality if $B$ is a parallelogram.
\end{conjecture}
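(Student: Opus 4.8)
The plan is to prove the equivalent statement that a minimizer of the functional $f(B)=\lambda_2(Q_6(B))\,\lambda_2(B^{\circ})$ over $B\in\Ktwoo$ can be taken to be a convex polygon with at most six vertices. Once this is established the conjecture follows at once from Mahler's inequality quoted before the conjecture: if $P\in\Ktwoo$ has at most six vertices, then $Q_6(P)=P$, so $f(P)=\lambda_2(P)\lambda_2(P^{\circ})\geq 8$, with equality exactly when $P$ is a parallelogram. In the language of the paper this amounts to $m^{HT}(6,2)=\frac{8}{\pi}$, which together with the monotonicity of $\{m^{HT}(n,2)\}$ and the upper bound $\frac{8}{\pi}$ noted before the conjecture gives $m^{HT}(n,2)=\frac{8}{\pi}$ for all $n\geq 6$.

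First I would fix a minimizer $\hat B$, which exists by the compactness argument of Remark~\ref{rem:affineinvariance}, and use Lemma~\ref{lem:Dowker} to assume that $Q:=Q_6(\hat B)$ is $o$-symmetric. The heart of the reduction is a variational \emph{bulging} argument. Suppose $\hat B$ has more than six vertices, so that $Q\subsetneq\hat B$, and suppose moreover that some boundary point $p\in\bd(\hat B)$ is not a vertex of any maximum-area inscribed hexagon. Since $p$ is not used by any maximiser, a compactness argument gives a gap $\delta>0$ between $\lambda_2(Q)$ and the largest area of an inscribed hexagon having a vertex near $p$; hence for a small symmetric outward perturbation of $\hat B$ at $\pm p$, producing $B_\varepsilon\supsetneq\hat B$, one still has $\lambda_2(Q_6(B_\varepsilon))=\lambda_2(Q)$, while $\lambda_2(B_\varepsilon^{\circ})<\lambda_2(\hat B^{\circ})$ strictly. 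This yields $f(B_\varepsilon)<f(\hat B)$, contradicting minimality. Consequently, for a minimizer with more than six vertices, \emph{every} boundary point of $\hat B$ is a vertex of some maximum-area inscribed hexagon.

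It therefore remains to rule out this second alternative, or rather to show that every $B\in\Ktwoo$ all of whose boundary points are vertices of a maximum-area inscribed hexagon still satisfies $f\geq 8$; I expect this to be the main obstacle. The class is genuinely nonempty: every ellipse belongs to it, and since $f$ is affine invariant, a direct computation for the Euclidean disk gives $f(\BB^2)=\frac{3\sqrt3}{2}\,\pi\approx 8.16>8$, so a crude estimate cannot separate these bodies from the extremal parallelograms by much. (In particular the naive bound $f(B)\geq\lambda_2(Q)\,\lambda_2(R^{\circ})$, where $R\supseteq B$ is the hexagon formed by the tangent lines of $B$ at the vertices of $Q$, is too weak: for a regular configuration it already drops below $8$, so the global maximality of $Q$ inside $B$, not merely the first-order conditions, must be used.) The natural approach is to impose the first-order optimality conditions — at each vertex $v$ of a maximum hexagon the tangent line of $\hat B$ is parallel to the chord joining the two neighbours of $v$ — along the whole rotating family of maximum hexagons, and to try to force $\bd(\hat B)$ into an affinely regular, ellipse-type shape by an argument in the spirit of Lemma~\ref{lem:Coxeter}, after which $f$ could be evaluated explicitly and compared with $8$.

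A complementary line of attack is through shadow systems. By Lemma~\ref{lem:RS} the map $t\mapsto\lambda_2(Q_6(C(t)))$ is convex, being a supremum of areas of inscribed hexagons whose vertices vary linearly in $t$, while by Lemma~\ref{lem:MR} the map $t\mapsto 1/\lambda_2(C(t)^{\circ})$ is convex. The difficulty, and the reason a one-line symmetrisation does not close the problem, is precisely that $f(C(t))$ is then a \emph{product} of a convex function and the reciprocal of a convex function, which need be neither convex nor concave; hence one cannot simply conclude that $f$ is minimised at the endpoints of a generic shadow system. Overcoming this, presumably by choosing shadow systems adapted to the rotating family of maximum hexagons so that one factor is held fixed while the other is driven towards the parallelogram, is the crux of the remaining argument, and the equality analysis would then single out parallelograms as the only minimizers.
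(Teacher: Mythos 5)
This statement is posed in the paper as an open \emph{conjecture}; the paper offers no proof of it (it only records the heuristic that $m^{HT}(n,2)\to\frac{8}{\pi}$ via Mahler's inequality), so there is no argument of the authors to compare yours against. Judged on its own terms, your proposal is not a proof: you say so yourself, and the place where it stops is exactly where the difficulty lies. The parts you do carry out are essentially sound. The reduction ``if a minimizer of $f(B)=\lambda_2(Q_6(B))\lambda_2(B^\circ)$ may be taken to be a polygon with at most six vertices, then Mahler finishes the problem'' is correct, and the bulging argument is workable: the set of boundary points that are vertices of some maximum-area inscribed hexagon is closed, so a point outside it has a whole neighbourhood outside it, a small symmetric outward bulge supported there leaves $\lambda_2(Q_6(\cdot))$ unchanged (by the compactness gap $\delta$ you invoke) while strictly shrinking the polar. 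This legitimately reduces the conjecture to bodies $\hat B$ every boundary point of which is a vertex of a maximum-area inscribed hexagon.

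The gap is that this residual class is precisely the hard part of the problem, and nothing in your proposal touches it. The class is large --- it contains all ellipses, and more generally it is the hexagonal analogue of the Radon-curve condition that the paper itself only manages to analyse in the parallelogram case ($n=4$, Theorems~\ref{thm:HTplane} and \ref{thm:Grmplanar}) --- and your own computation $f(\BB^2)=\tfrac{3\sqrt{3}}{2}\pi\approx 8.16$ shows the inequality has almost no slack there, so no soft estimate will close it. Your two suggested routes both break down for the reasons you identify: the first-order conditions along the rotating family of maximum hexagons do not obviously propagate into a Coxeter-type recursion (for $n=4$ the paper needs the full machinery of Radon curves, Lemma~\ref{lem:MR}, and a delicate second deformation fixing the midpoint condition, and even then only characterizes critical polygons, not arbitrary bodies); and for shadow systems, $t\mapsto\lambda_2(Q_6(C(t)))$ is convex while $t\mapsto 1/\lambda_2(C(t)^\circ)$ is convex, so their product controls neither an upper nor a lower bound for $f$ along the deformation --- this is why the Meyer--Reisner technique proves \emph{maximality} of the volume product for affinely regular polygons but cannot be run in reverse to prove a \emph{minimality} statement like this one. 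Until a replacement for this step is found, the conjecture remains open and your argument establishes only the (useful, but partial) structural reduction.
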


We note that the `dual' of this problem, stating that the quantity  $\lambda_2(H(B)) \lambda_2(B^{\circ})$, where $H$ is a smallest area hexagon circumscribed about $B$, is maximal for ellipses, is equivalent to a conjecture of Makai, Jr. \cite{Makai} from 1978, which was proved very recently by Aliev \cite{Aliev}.


\section*{Acknowledgments}

We are  grateful to an anonymous referee for their careful reading and comments which helped improve this paper.

\end{document}